\title{Relative Frobenius algebras are groupoids}
\date{\today}
\author{Chris Heunen}
\address{Department of Computer Science, University of Oxford, Wolfson
  Building, Parks Rd, OX1 3QD, Oxford, United Kingdom}
\thanks{Supported by ONR.}
\email{heunen@cs.ox.ac.uk}
\author{Ivan Contreras}
\address{Institut f{\"u}r Mathematik, Universit{\"a}t Z{\"u}rich,
  Winterthurerstrasse 190, CH-8057, Z{\"u}rich, Switzerland}
\thanks{Partially supported by SNF Grant 20-131813.}
\email{ivan.contreras@math.uzh.ch}
\email{alberto.cattaneo@math.uzh.ch}
\author{Alberto S. Cattaneo}
\subjclass[2000]{18B40, 18D35, 20L05}
\keywords{Frobenius algebra, H*-algebra, groupoid, semigroupoid}
\newcommand{\cat}[1]{\ensuremath{\mathbf{#1}}}
\newcommand{\Cat}[1]{\ensuremath{\mathbf{#1}}}
\newcommand{\op}{\ensuremath{^{\mathrm{op}}}}
\newcommand{\after}{\ensuremath{\circ}}
\newcommand{\relto}{\ensuremath{\xymatrix@1@C-2ex{\ar|(.4)@{|}[r] &}}}
\newcommand{\defined}{\ensuremath{\mathop{\downarrow}}}
\newcommand{\rel}{\ensuremath{^{\mathrm{rel}}}}
\newcommand{\func}{\ensuremath{^{\mathrm{func}}}}
\newcommand{\mfunc}{\ensuremath{^{\mathrm{mfunc}}}}
\newcommand{\name}[1]{\ensuremath{\ulcorner #1 \urcorner}}
\newcommand{\swapmap}{\ensuremath{\sigma}}
\theoremstyle{plain}
\newtheorem{theorem}[equation]{Theorem}
\newtheorem{corollary}[equation]{Corollary}
\newtheorem{lemma}[equation]{Lemma}
\newtheorem{proposition}[equation]{Proposition}
\theoremstyle{definition}
\newtheorem{definition}[equation]{Definition}
\tikzstyle{none}=[inner sep=1pt]
\tikzstyle{box}=[draw=black, fill=white, inner sep=.5ex, rounded corners=.1ex]
\tikzstyle{cross}=[preaction={draw=white, -, line width=3pt}]
\tikzstyle{dot}=[circle, draw=black, fill=black!50, inner sep=1.25pt]
\begin{document}
\maketitle

\begin{abstract}
  We functorially characterize groupoids as special
  dagger Frobenius algebras in the category of sets and relations. 
  This is then generalized to a non-unital setting, by establishing an
  adjunction between H*-algebras in the 
  category of sets and relations, and locally cancellative regular
  semigroupoids. Finally, we study a universal passage from the former
  setting to the latter.
\end{abstract}

\section{Introduction}

Groupoids generalize groups in two ways. They can be regarded as
groups with more than one object, leading to the definition as (small)
categories in which every morphism is invertible. Alternatively, they can be
regarded as groups whose multiplication is relaxed to a partial
function. This article makes the connection between these two views
precise by detailing isomorphisms between the appropriate
categories. The latter view is made rigorous by so-called special
dagger Frobenius algebras in the category of sets and relations. Thus
we give a non-commutative and functorial generalization
of~\cite{pavlovic:frobrel} in Section~\ref{sec:frob}.

These results are somewhat surprising from the point of view of
quantum groups, another generalization of the concept of
group. Quantum groups
are usually formalized as some sort of Hopf algebra. However, this
notion is at odds with that of Frobenius algebra: if a multiplication
carries both Hopf and Frobenius structures, then it must be
trivial.\footnote{More precisely, in the language of
  Section~\ref{sec:frob}, if morphisms $m \colon X \otimes X \to X$
  and $m^\dag \colon X \to X \otimes X$ in a monoidal category
  satisfy (F), (A), (U), and the variant of (U) for $m^\dag$, and $m^\dag$ 
  and $u^\dag$ are monoid homomorphisms, then $X \cong I$.
  This result holds fully abstractly; for a proof in the category of
  finite-dimensional vector spaces, see~\cite[Proposition~2.4.10]{kock:frobenius}.}

Reformulating groupoids as relative Frobenius algebras has two
advantages. First, it yields several interesting new choices of
morphisms between groupoids. Second, Frobenius algebras
can be interpreted in many categories without limits, whereas the usual 
formulation of a groupoid as an internal
category requires the ambient category to have finite limits.  

For example, a commutative Frobenius algebra structure on a
finite-dimensional Hilbert space corresponds to a choice of
orthonormal basis for that space. For this correspondence to hold in
arbitrary dimension, the Frobenius algebra structure must be relaxed to
a so-called H*-algebra structure, basically dropping
units~\cite{abramskyheunen:hstar}. Section~\ref{sec:hstar} considers
the relative version: it turns out that H*-algebras in the category of sets
and relations correspond to so-called locally cancellative regular
semigroupoids. The correspondence is functorial, but this time 
gives adjunctions instead of isomorphisms of categories. Finally, 
Section~\ref{sec:quotient} considers a universal passage from
H*-algebras to Frobenius algebras.

The generalization to H*-algebras is useful for an application to
geometric quantization that will be presented in subsequent work,
where one is forced to work with semigroupoids instead of
groupoids. Rather than in the category of sets and relations, this
plays out in the smooth setting of symplectic manifolds and canonical
relations, corresponding to Lie groupoids. One could imagine similar
applications in a topological or localic setting~\cite{resende:etalegroupoids}.

\section{Relative Frobenius algebras and groupoids}
\label{sec:frob}

A (small) category can be defined as a category internal to the category $\Cat{Set}$
of sets and functions, see~\cite[Section~XII.1]{maclane:categories}. This is a
collection
\[\xymatrix@C+3ex{
    G_0 \ar|-{e}[r] 
    & G_1 \ar@<-1.2ex>|-{s}[l] \ar@<1.2ex>|-{t}[l] 
    & G_1 \times_{G_0} G_1 \ar|-{m}[l]
}\]
of objects $G_0$ (objects) and $G_1$ (morphisms) and morphisms $s$
(source), $t$ (target), $e$ (identity), and $m$ (composition). These data
have to satisfy familiar algebraic formulae, stating \textit{e.g.}\
that composition $m$ is associative. A functor then is a pair of
functions $f_i \colon G_i \to G'_i$ that commute with the above structure.
A category is a \emph{groupoid} when there additionally is a
morphism $i \colon 
G_1 \to G_1$ (inverse) satisfying $m \after (1 \times i) \after
\Delta = e \after s \colon G_1 \to G_1$. Notice that this formulation requires
the monoidal structure $\times$ to have diagonals $\Delta \colon G_1
\to G_1 \times G_1$, and the category to have pullbacks.

This section proves that groupoids correspond precisely to so-called
relative Frobenius algebras. To introduce the latter concept, we pass
to the category $\Cat{Rel}$ of sets and relations, where morphisms $X
\to Y$ are relations $r \subseteq X \times Y$, and
\[
  s \after r = \{ (x,z) \mid \exists y \,.\, (x,y) \in r, (y,z) \in s \}.
\]
It carries a contravariant identity-on-objects involution $\dag \colon
\Cat{Rel}\op \to \Cat{Rel}$ given by relational converse. It also has
compatible monoidal structure, namely Cartesian product of sets. This makes
$\Cat{Rel}$ into a so-called dagger symmetric monoidal category. 
To distinguish $\Cat{Rel}$ from its subcategory $\Cat{Set}$, we
write morphisms in the former category as $r \colon X \relto Y$, and
morphisms in the latter as $f \colon X \to Y$.

\begin{definition}
  Consider the following properties of a morphism $m \colon X \times X
  \relto X$ in $\Cat{Rel}$:
  \begin{align}
    \tag{F} (1 \times m) \after (m^\dag \times 1) = m^\dag \after m &
    = (m \times 1) \after (1
    \times m^\dag), \\
    \tag{M} m \after m^\dag & = 1, \\
    \tag{A} m \after (1 \times m) & = m \after (m \times 1), \\
    \tag{U} \text{ there is }u \colon 1 \relto X \text{ with } m
    \after (u \times 1) = 1 & = m \after (1 \times u).
  \end{align}
  If $u$ exists then it is automatically unique. 
  An object $X$ together with such a morphism $m$ is called a (unital)
  special dagger Frobenius algebra in $\Cat{Rel}$, or \emph{relative
  Frobenius algebra} for short. 
  Notice that this definition requires neither pullbacks nor diagonals,
  and makes sense in any dagger monoidal category.
\end{definition}

The defining conditions of Frobenius algebras can also be presented
graphically. Such string diagrams encode composition by drawing
morphisms on top of each other, and the monoidal product becomes 
drawing morphisms next to each other. The dagger becomes a vertical
reflection; we refer to~\cite{selinger:graphicallanguages} for more
information. We depict $m \colon X \times X \relto X$ as
$\smash{\vcenter{\hbox{\begin{tikzpicture}[scale=0.33,thick] 
  \node (3) at (-.75,-2) {};
  \node [style=dot] (1) at (0,-1) {};
  \node (0) at (0,0) {};
  \node (2) at (.75,-2) {};
  \draw [bend left=45] (1) to (2);
  \draw [bend left=45] (3) to (1);
  \draw (1) to (0);
\end{tikzpicture}}}}$, and $u \colon 1 \relto X$ as 
$\smash{{\hbox{\begin{tikzpicture}[scale=0.33,thick] 
  \node [style=dot] (1) at (0,-1) {};
  \node (0) at (0,0) {};
  \draw (1) to (0);
\end{tikzpicture}}}}$.
\[
  \vcenter{\hbox{\begin{tikzpicture}[thick,scale=0.5]
    \node (0) at (2, 2) {};
    \node (1) at (-1, 2) {};
    \node [style=dot] (2) at (-1, 1) {};
    \node (3) at (2, -0) {};
    \node (4) at (0, -0) {};
    \node (5) at (-2, -0) {};
    \node [style=dot] (6) at (1, -1) {};
    \node (7) at (1, -2) {};
    \node (8) at (-2, -2) {};
    \draw (2) to (1);
    \draw (6) to (7);
    \draw (5) to (8);
    \draw[in=90, out=180] (2) to (5.south);
    \draw[in=270, out=180] (6) to (4.south);
    \draw[in=0, out=270] (3.north) to (6);
    \draw[in=0, out=90] (4.south) to (2);
    \draw (0) to (3);
  \end{tikzpicture}}}
  \stackrel{\text{(F)}}{=}
  \vcenter{\hbox{\begin{tikzpicture}[thick,scale=0.5]
    \node (0) at (-1, 2) {};
    \node (1) at (1, 2) {};
    \node [style=dot] (2) at (0, 1) {};
    \node [style=dot] (3) at (0, -1) {};
    \node (4) at (-1, -2) {};
    \node (5) at (1, -2) {};
    \draw (2) to (3);
    \draw[in=90, out=0] (3) to (5);
    \draw[in=180, out=90] (4) to (3);
    \draw[in=180, out=270] (0) to (2);
    \draw[in=270, out=0] (2) to (1);
  \end{tikzpicture}}}
  \stackrel{\text{(F)}}{=}
  \vcenter{\hbox{\begin{tikzpicture}[thick,scale=0.5]
    \node (0) at (-2, 2) {};
    \node (1) at (1, 2) {};
    \node [style=dot] (2) at (1, 1) {};
    \node (3) at (-2, -0) {};
    \node (4) at (0, -0) {};
    \node (5) at (2, -0) {};
    \node [style=dot] (6) at (-1, -1) {};
    \node (7) at (-1, -2) {};
    \node (8) at (2, -2) {};
    \draw (2) to (1);
    \draw (6) to (7);
    \draw (5) to (8);
    \draw[in=90, out=0] (2) to (5.south);
    \draw[in=270, out=0] (6) to (4.south);
    \draw[in=180, out=270] (3.north) to (6);
    \draw[in=180, out=90] (4.south) to (2);
    \draw (0) to (3);
  \end{tikzpicture}}}
  \qquad\qquad
  \vcenter{\hbox{\begin{tikzpicture}[thick,scale=0.5]
    \node (0) at (0,-2) {};
    \node (3) at (0,2) {};
    \node[dot] (2) at (0,1) {};
    \node[dot] (1) at (0,-1) {};
    \draw [bend right=90]  (2) to (1);
    \draw  (2) to (3);
    \draw [bend right=90]  (1) to (2);
    \draw  (0) to (1);
  \end{tikzpicture}}}
  \stackrel{\text{(M)}}{=}
  \vcenter{\hbox{\begin{tikzpicture}[thick,scale=0.5]
    \node (0) at (0,-2) {};
    \node (3) at (0,2) {};
    \draw  (0) to (3);
  \end{tikzpicture}}}
\]
\[
   \vcenter{\hbox{\begin{tikzpicture}[thick,scale=0.5]
    \node [style=dot] (1) at (0,-1) {};
    \node (5) at (0,1) {};
    \node (0) at (0,-2) {};
    \node (6) at (1,1) {};
    \node (2) at (1,0) {};
    \node (4) at (-2,1) {};
    \node [style=dot] (3) at (-1,0) {};
    \draw (1) to (0);
    \draw [bend right=45]  (1) to (2);
    \draw [bend left=45]  (5) to (3);
    \draw (6) to (2.south);
    \draw [bend right=45]  (4) to (3);
    \draw [bend right=45]  (3) to (1);
  \end{tikzpicture}}}
  \stackrel{\text{(A)}}{=}
  \vcenter{\hbox{\begin{tikzpicture}[thick,scale=0.5]
    \node [style=dot] (1) at (0,-1) {};
    \node (5) at (0,1) {};
    \node (0) at (0,-2) {};
    \node (6) at (-1,1) {};
    \node (2) at (-1,0) {};
    \node (4) at (2,1) {};
    \node [style=dot] (3) at (1,0) {};
    \draw  (1) to (0);
    \draw [bend left=45]  (1) to (2);
    \draw [bend right=45]  (5) to (3);
    \draw  (6) to (2.south);
    \draw [bend left=45]  (4) to (3);
    \draw [bend left=45]  (3) to (1);
  \end{tikzpicture}}}
  \qquad
  \vcenter{\hbox{\begin{tikzpicture}[thick,scale=0.5]
    \node [style=dot] (3) at (-1,2) {};
    \node [style=dot] (1) at (0,1) {};
    \node (0) at (0,0) {};
    \node (2) at (1,2) {};
    \node (4) at (1,3) {};
    \draw [bend right=45]  (1) to (2);
    \draw [bend right=45]  (3) to (1);
    \draw (1) to (0);
    \draw (2.south) to (4);
  \end{tikzpicture}}}
  \stackrel{\text{(U)}}{=}
  \vcenter{\hbox{\begin{tikzpicture}[thick,scale=0.5]
    \node (0) at (0,-1) {};
    \node (3) at (0,2) {};
    \draw  (0) to (3);
  \end{tikzpicture}}}
  \stackrel{\text{(U)}}{=}
  \vcenter{\hbox{\begin{tikzpicture}[thick,scale=0.5]
    \node [style=dot] (3) at (1,2) {};
    \node [style=dot] (1) at (0,1) {};
    \node (0) at (0,0) {};
    \node (2) at (-1,2) {};
    \node (4) at (-1,3) {};
    \draw [bend left=45]  (1) to (2);
    \draw [bend left=45]  (3) to (1);
    \draw (1) to (0);
    \draw (2.south) to (4);
  \end{tikzpicture}}}
\]
To prevent jumping back and forth between formalisms, we will mostly
compute algebraically. But occasionally we will illustrate
conditions graphically.

\subsection{From relative Frobenius algebras to groupoids}

For the rest of this subsection, fix a relative Frobenius algebra $(X,m)$.

Concretely, (M) implies that $m$ is single-valued. Therefore we may
write $f=hg$ instead of $((h,g),f) \in m$. Notice that this notation
implies that $hg$ is defined, which fact we denote by $hg
\defined$. We will use this \emph{Kleene equality} throughout this
article, by reading $x=y$ for $x,y \in X$ as follows: if either side is
defined, so is the other, and they are equal. The concrete meaning of
(M) is completed by   
\[
 \forall f \in X \,\exists g,h \in X .\, f=hg.
\]
Concretely, (F) means that for all $a,b,c,d \in X$
\[
  ab=cd \iff \exists e \in X .\, b=ed, c=ae \iff \exists e \in X .\, d=eb, a=ce.
\]
The condition (A) comes down to $(fg)h=f(gh)$. Finally, identifying
the morphism $u \colon 1 \relto X$ with a subset $U \subseteq X$, we
find that (U) means 
\begin{align*}
  \forall f \in X \, \exists u \in U .&\, fu=f, \\
  \forall f \in X \, \exists u \in U .&\, uf=f, \\
  \forall f \in X \, \forall u \in U .&\, fu \defined \implies fu=f, \\
  \forall f \in X \, \forall u \in U .&\, uf \defined \implies uf=f.
\end{align*}

\begin{definition}
  Given a relative Frobenius algebra $m$, define the following objects and morphisms in
  $\Cat{Rel}$:
  \begin{align*}
    G_1 & = X, \\
    G_2 & = \{(g,f) \in X^2 \mid gf \defined \}, \\
    G_0 & = U, \\
    e & = U \times U \colon G_0 \relto G_1, \\
    s & = \{ (f,x) \in G_1 \times G_0 \mid fx \defined \} \colon G_1 \relto G_0, \\
    t & = \{ (f,y) \in G_1 \times G_0 \mid yf \defined \} \colon G_1 \relto G_0, \\
    i & = \{ (g,f) \in G_2 \mid gf \in G_0, fg \in G_0 \} \colon G_1 \relto G_1.
  \end{align*}
\end{definition}

We will prove that the collection $\cat{G}$ of these data is a
groupoid (in $\Cat{Set}$). First, we show that the relations $s,t,i$ are in fact
(graphs of) functions, as is clearly the case for $e$. 

\begin{lemma}
  For $f \in X$ and $u,v \in U$:
  \begin{enumerate}
  \item if $fu \defined$ then $u^2 \defined$;
  \item if $fu \defined$ and $fv \defined$ then $uv \defined$;
  \item if $fu \defined$ and $fv \defined$ then $u=v$.
  \end{enumerate}
  Hence the relation $s$ is (the graph of) a function. Similarly, $t$ is (the graph of) a
  function.
\end{lemma}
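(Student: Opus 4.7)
The strategy is to apply the Frobenius law~(F) to a carefully chosen equation of two products, producing an existentially quantified ``middle element'' $e$, and then to collapse $e$ to a unit using the third or fourth clause of~(U).

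For~(1), applying~(F) to the tautology $fu=fu$ (legitimate because $fu\defined$) yields $e\in X$ with $eu=u$ and $fe=f$. The third clause of~(U), applied to $eu\defined$ with $u\in U$, gives $eu=e$; combined with $eu=u$ this forces $e=u$, so the defined product $eu$ is the defined product $u^2$. For~(2), the hypotheses give $fu=f=fv$ by~(U), so applying the same form of~(F) to the equation $fu=fv$ produces $e$ with $ev=u$; the third clause of~(U) again collapses this to $ev=e$, hence $u=e$, and therefore $uv=ev$ is defined. For~(3), once $uv\defined$ from~(2), the third clause of~(U) forces $uv=u$ while the fourth clause forces $uv=v$, so $u=v$.

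Totality of $s$ is then the first clause of~(U), and single-valuedness is exactly~(3), so $s$ is a function. The argument for $t$ is dual: it uses the other form of the biconditional~(F), namely $ab=cd\iff\exists e.\,d=eb,\,a=ce$, together with the second and fourth clauses of~(U). The only step that requires genuine care is the bookkeeping of Kleene equality, so that each deduction such as ``$eu\defined$, hence $eu=e$ by~(U)'' is genuinely synthesized with the earlier $eu=u$ rather than silently restating it.
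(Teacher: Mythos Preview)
Your proof is correct and follows the same overall strategy as the paper (use (U) to simplify products with units, use (F) to manufacture a middle element), but with two small tactical differences worth noting.

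For~(1), the paper argues more directly via associativity: from $fu\defined$ one has $fu=f$ by (U), hence $(fu)u=fu$ is defined, and (A) then forces $f(uu)\defined$, so $u^2\defined$. Your route instead applies (F) to $fu=fu$ and identifies the middle element with $u$; this works equally well but trades the single use of (A) for a use of (F) plus a collapse via (U).

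For~(2), both proofs reach $u=ev$ from (F) applied to $fu=fv$. The paper then writes $uv=ev^2\defined$, implicitly invoking part~(1) (and the consequence $v^2=v$) to see that $ev^2=ev$ is defined. Your argument is a bit cleaner: since $v\in U$ and $ev\defined$, (U) gives $ev=e$, so $e=u$ and $uv=ev\defined$ without any appeal to~(1). Part~(3) and the conclusion about $s$ and $t$ coincide with the paper.
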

\begin{proof}
  If $fu \defined$, then $fu=f$ by (U), so that also $(fu)u=f$. By (A), this means in particular
  that $u^2\defined$, establishing (1). For part (2), assume that $fu \defined$ and $fv
  \defined$. Then $fu=f=fv$, and by (F) we have $u=ev$ for some $e \in X$, so that
  $uv=ev^2 \defined$. Finally, for (3), notice that if $fu \defined$ and $fv \defined$
  then $u=uv=v$ by (U).
\end{proof}

\begin{lemma}
  The pullback of $s$ and $t$ in $\Cat{Set}$ is (isomorphic to) $G_2$.
\end{lemma}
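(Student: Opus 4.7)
My plan is to identify the pullback in $\Cat{Set}$ of $s, t \colon G_1 \to G_0$ with its standard concrete description,
\[
  P = \{(g,f) \in X \times X \mid s(g) = t(f)\},
\]
equipped with the two projections, and then to show that $P$ coincides with $G_2$ as subsets of $X \times X$. Everything therefore reduces to proving, for $g, f \in X$, the equivalence
\[
  gf \defined \iff s(g) = t(f).
\]

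For the forward direction, suppose $gf \defined$. By (U), $s(g) \in U$ satisfies $g \cdot s(g) = g$, so $(g \cdot s(g)) \cdot f = gf$ is defined. Reading (A) as a Kleene equality then yields $g \cdot (s(g) \cdot f) \defined$, and in particular $s(g) \cdot f \defined$. Since $s(g) \in U$, the uniqueness statement of the previous lemma (the ``similarly'' clause for $t$) forces $s(g) = t(f)$.

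The backward implication is where the real work is, and I expect it to be the main obstacle, since neither (U) nor (A) can produce a composite $gf$ out of thin air. Assume $u := s(g) = t(f)$, so that $gu = g$ and $uf = f$. The strategy is to evaluate both sides of (F),
\[
  (1 \times m) \after (m^\dag \times 1) \;=\; m^\dag \after m,
\]
on the point $(g,f) \in X \times X$. The left-hand side sends $(g,f)$ to the set $\{(a, bf) \mid ab = g,\ bf \defined\}$, which is nonempty because the choice $(a,b) = (g,u)$ satisfies $gu = g$ and $uf \defined$. The right-hand side sends $(g,f)$ to $\{(c,d) \mid cd = gf\}$, which is nonempty precisely when $gf \defined$. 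Equality of the two sides therefore forces $gf \defined$.

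Conceptually, (F) is exactly the axiom that promotes the partial operation $m$ from an associative-with-units structure on $X$ to a genuine composition defined on the pullback of the endpoint maps; so the pullback description of $G_2$ should be expected to drop out of (F) together with (U) and (A).
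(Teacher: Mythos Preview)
Your proof is correct and follows the same plan as the paper: identify the pullback concretely as $P = \{(g,f) \mid s(g)=t(f)\}$ and show $P = G_2$ as subsets of $X \times X$. Your forward direction is essentially the paper's, just phrased through $s(g)$ rather than a fresh $y \in U$.

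For the backward direction the two arguments differ in substance. The paper sets $y = s(g) = t(f)$ and asserts, without further argument, that $gyf \defined$, then invokes (U) to conclude $gf \defined$. But knowing only $gy \defined$ and $yf \defined$ does not force $gyf \defined$ from (A), (M), and (U) alone; one genuinely needs (F) here. Your explicit use of the relational equality $(1 \times m) \after (m^\dag \times 1) = m^\dag \after m$, with the witness $(a,b)=(g,u)$ making the left-hand side nonempty at $(g,f)$ and hence forcing $gf \defined$ on the right, supplies exactly this missing justification. So your argument is in fact more complete than the paper's at this step, and your closing remark---that (F) is precisely the axiom extending $m$ to the whole pullback---is on target.
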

\begin{proof}
  The pullback of $s$ and $t$ is given by $P = \{ (g,f) \in X \mid s(g)=t(f) \}$. 
  Now $s(g)$ is the unique $y \in U$ with $gy \defined$, and $t(f)$ is the unique $y'
  \in U$ with $y'f \defined$. So, if $(g,f) \in P$, then $y=y'$ so that $gyf \defined$, and by
  (U) also $gf \defined$ so $(g,f) \in G_2$. Conversely, if $(g,f) \in G_2$, then by (U) there
  exists $y \in U$ such that $gyf \defined$, and we have $s(g)=y=t(f)$.
\end{proof}

\begin{lemma}\label{lem:inverses}
  The following diagram in $\Cat{Rel}$ commutes.
  \[\xymatrix{
    G_1 \ar_-{\Delta}[d] \ar^-{s}[rr] && G_0 \ar^-{e}[d] \\
    G_1 \times G_1 \ar_-{1 \times i}[r] & G_1 \times G_1 \ar_-{m}[r] & G_1
  }\]
  Here, $\Delta$ is (the graph of) the diagonal function $x \mapsto (x,x)$.
\end{lemma}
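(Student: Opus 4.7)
\emph{Plan.} I would verify the diagram pointwise. For $f \in X$, since $s$ is a function (by the preceding lemma) and $e$ is the functional graph of the inclusion $U \subseteq X$, the right-hand composite sends $f$ to the singleton $\{s(f)\}$. Unwinding $\Delta$, $1 \times i$, and $m$, the left-hand composite sends $f$ to
\[
  \{fg : fg \in U \text{ and } gf \in U\}.
\]
The claim therefore reduces to identifying this set with $\{s(f)\}$; this requires both the existence of $g \in X$ with $fg, gf \in U$ (i.e.\ a two-sided inverse for $f$) and the pinning down of the value $fg$.

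For existence, the decisive input is (F). By (U) we have $f = f \cdot s(f) = t(f) \cdot f$, giving the equation $f \cdot s(f) = t(f) \cdot f$. Applying (F) in its elementwise form ``$ab = cd$ iff there exists $e \in X$ with $b = ed$ and $c = ae$'' to the substitution $(a,b,c,d) = (f, s(f), t(f), f)$ produces a $g \in X$ satisfying $gf = s(f)$ and $fg = t(f)$. Both products lie in $U$, so $(f,g) \in i$, and this $g$ witnesses that the left-hand set is nonempty and contains the required unit.

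For the reverse inclusion, take any $g$ with $fg, gf \in U$. Associativity (A) gives $(fg) \cdot f = f \cdot (gf)$, and both sides collapse to $f$ by (U). Hence $fg \in U$ is a unit that absorbs $f$ correctly on one side, and the uniqueness statement of the preceding lemma (in its symmetric variant for the other unit) forces $fg$ to equal the single distinguished unit, establishing the singleton equality. The main obstacle is the existence step: spotting that the two-sided unit decomposition $f \cdot s(f) = t(f) \cdot f$ is exactly the right input for (F), whose ``interpolation'' of a middle element is precisely the inverse datum; once this is produced, the remaining arguments are routine uses of (A) and the uniqueness of units.
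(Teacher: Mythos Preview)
Your decisive step---obtaining $f \cdot s(f) = t(f) \cdot f$ from (U) and then applying (F) to this equation to produce a $g$ with $gf = s(f)$ and $fg = t(f)$---is exactly the paper's argument for the nontrivial inclusion (the paper phrases it as $fu = vf$ with $u,v \in U$ and extracts $g$ from (F)). So on the main idea the approaches coincide.

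There is, however, an orientation slip in your bookkeeping. You unwind $m \circ (1 \times i) \circ \Delta$ to $\{fg : fg, gf \in U\}$, whereas the paper computes it as $\{gf : fg, gf \in U\}$. With your reading, your own reverse-inclusion argument gives $(fg)\cdot f = f$, which identifies $fg$ as the \emph{left} unit $t(f)$, not the right unit $s(f)$; so what your computation actually establishes is the $t$-version of the diagram, and the final claim that $fg$ equals the ``distinguished unit'' $s(f)$ does not follow from your own work. This is a labelling issue rather than a missing idea. A smaller gap: in that reverse inclusion you invoke $(fg)f = f(gf)$ and collapse via (U), but (A) only gives Kleene equality, and you have not checked that either side is actually defined before applying (U). One clean fix is to show $gf = s(f)$ directly: from $f = f\cdot s(f)$ and (A) one gets $(gf)\cdot s(f) = gf$, so $s(gf)=s(f)$; and for any $u \in U$ the unit axioms force $u\cdot s(u)=u$ and $u\cdot s(u)=s(u)$, hence $s(u)=u$; combining, $gf = s(gf) = s(f)$, after which $f(gf)=f\cdot s(f)=f$ is defined. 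The paper handles this inclusion with a bare ``clearly''.
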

\begin{proof}
  First we compute 
  \[
    e \after s 
    = \{(f,g) \in G_1 \times G_1 \mid \exists u \in U .\, g=u, fu \defined \}
    = \{ (f,u) \in X \times U \mid fu \defined \},
  \]
  and
  \begin{align*}
    m \after (1 \times i) \after \Delta 
    & = \{ (f,h) \in G_1^2 \mid \exists g \in G_1 .\, fg \in U, gf \in U, h=gf \} \\
    & = \{ (f,gf) \in G_1^2 \mid g \in G_1 ,\, fg \in U \ni gf \}.
  \end{align*}
  Clearly $m \after (1 \times i) \after \Delta \subseteq e \after s$. For the converse,
  suppose that $(f,u) \in e \after s$. Since $fu \defined$ we then have $fu=f$ by (U).
  Therefore, again by (U), there exists $v \in U$ such that $fu=vf$. Now it follows from (F)
  that there exists $g$ with $u=gf$ and $v=fg$. Thus $(f,u)=(f,gf) \in m \after (1 \times i)
  \after \Delta$.
\end{proof}

\begin{lemma}
  The relation $i$ is (the graph of) a function.    
\end{lemma}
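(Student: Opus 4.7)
The plan is to check the two conditions for $i$ to be a function: that it is total on $G_1$, and that it is single-valued. Totality will follow immediately from Lemma~\ref{lem:inverses}, and single-valuedness from a short computation using (A) and (U).

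For totality I will fix an arbitrary $g \in G_1$ and produce at least one $h$ with $(g,h) \in i$. The pair $(g, s(g))$ lies in the relation $e \after s$, because $s(g) \in U$ and $g\,s(g) \defined$ by construction of $s$. Lemma~\ref{lem:inverses} identifies $e \after s$ with $m \after (1 \times i) \after \Delta$, so $(g, s(g))$ also lies in the latter composition; unfolding its description as in the proof of Lemma~\ref{lem:inverses} then exhibits some $h$ with $(g,h) \in i$ (and incidentally $gh = s(g)$).

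For single-valuedness I will assume $(g,h)$ and $(g,h')$ both lie in $i$, so that the four products $gh$, $hg$, $gh'$, $h'g$ all lie in $U$. A brief source/target check---using that $gh \in U$ forces $t(h) = s(g)$, while $h'g \in U$ forces $s(h') = t(g)$---shows that both $(h'g)\,h$ and $h'\,(gh)$ are defined. Then (A) and (U) combine to give
\[
  h \;=\; (h'g)\,h \;=\; h'\,(gh) \;=\; h',
\]
where the outer equalities use the two clauses of (U) applied to the units $h'g$ and $gh$ respectively, and the middle equality is (A).

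The only mildly awkward point, and the one I expect to spend most care on, is the bookkeeping of Kleene equalities: since (A) and (U) are stated only up to definedness, I must confirm that the relevant composites $(h'g)\,h$ and $h'\,(gh)$ are defined before chaining the equalities above. Everything else is routine.
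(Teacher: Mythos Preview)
Your proposal is correct. The existence half is exactly the paper's argument, via Lemma~\ref{lem:inverses}. For uniqueness you take a shorter route than the paper: you go directly from $h'g, gh \in U$ to the one-line chain $h = (h'g)h = h'(gh) = h'$ using only (A) and the last two clauses of (U). The paper instead (in your notation) first establishes $ghg = g = gh'g$, then invokes the earlier lemma characterizing $s$ to deduce $hg = h'g$ from the fact that $hg, h'g \in U$ with $g(hg)\defined$ and $g(h'g)\defined$, and finally concludes $h = h(gh) = (hg)h = (h'g)h = h'(gh) = h'$. Your version avoids this detour through the source lemma; the paper's version has the minor expository advantage of visibly reusing that lemma.

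One small remark on the bookkeeping you flag: the two facts you record, $t(h)=s(g)$ and $s(h')=t(g)$, do not by themselves give $s(h'g)=t(h)$; you also need $s(h'g)=s(g)$ (equivalently $t(gh)=t(g)$). This is immediate from (U) and (A)---for instance $g\,s(g)=g$ gives $(h'g)\,s(g)=h'(g\,s(g))=h'g\defined$---but it is worth saying, since it is precisely the step where the paper's longer argument also has to work.
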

\begin{proof}
  We need to prove that to each $f \in X$ there is a unique $g \in X$ with $gf \in U \ni fg$.
  We already have existence of such a $g$ by Lemma~\ref{lem:inverses}, so it suffices to
  prove unicity. Suppose that $gf \in U \ni fg$ and $g'f \in U \ni fg'$. Then in particular $fg
  \defined$ and $gf \defined$, so that by (A) also $fgf \defined$ and similarly $fg'f
  \defined$. Now (U) implies $fgf=f=fg'f$, so that by the previous conjecture $gf=g'f$. But
  then $g=gfg=g'fg=g'$.  
\end{proof}

\begin{theorem}\label{frobisgpd}
  If $m$ is a relative Frobenius algebra, then $\cat{G}$ is a groupoid
  (in $\Cat{Set}$).
\end{theorem}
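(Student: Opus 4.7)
The plan is to combine the preceding four lemmas with Kleene-style readings of (F), (M), (A), (U) to verify the groupoid axioms. The lemmas already give us that $s$, $t$, and $i$ are functions, that $G_2$ is the pullback $G_1 \times_{G_0} G_1$ of $s$ and $t$, and that the inverse diagram $m \after (1 \times i) \after \Delta = e \after s$ commutes. What remains is to check that $m$ restricts to a function $G_2 \to G_1$ and that $e$ is a function $G_0 \to G_1$, and to establish the axioms $s \after e = 1 = t \after e$, the two unit laws, associativity of $m$, and the symmetric inverse law.

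Functionality of $m$ on $G_2$ is the Kleene content of (M), combined with $G_2$ being by construction its domain of definition; $e$ is the canonical inclusion $U \hookrightarrow X$. The source-and-target-of-identity axioms $s \after e = 1 = t \after e$ come from applying (U) to a unit $u \in U$ itself, using $u^2 \defined$ (item~(1) of the first auxiliary lemma) together with uniqueness of units. The two unit laws $f \cdot e(s(f)) = f$ and $e(t(f)) \cdot f = f$ are direct restatements of (U) once the definitions of $s$ and $t$ are unfolded. Associativity of $m$ is the Kleene reading of (A): the equation $(fg)h = f(gh)$ under Kleene equality already carries equality of domains of definition. For the inverse laws, one direction is obtained by unfolding Lemma~\ref{lem:inverses} pointwise; the symmetric direction is proved by the same argument after swapping $(1 \times i)$ with $(i \times 1)$ and $s$ with $t$, invoking the other half of (F).

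The main obstacle is the bookkeeping of defined-ness: each groupoid axiom really is a conjunction of ``both sides are defined'' and ``they are equal'', and every such assertion must be matched to a Kleene identity. Condition (F) is the workhorse throughout, providing the existential witnesses---such as the connecting element in ``$ab = cd \iff \exists w \colon b = wd,\ c = aw$''---that produce defined compositions exactly where the groupoid axioms require them.
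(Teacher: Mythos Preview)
Your proposal is correct and matches the paper's approach exactly: the paper's own proof is the one-liner ``routine verifications \ldots\ the most interesting one has already been dealt with in Lemma~\ref{lem:inverses}'', and you have simply spelled out what those routine verifications are, using the four preceding lemmas in the intended way. One small caution on the associativity step: the Kleene reading of (A) only gives that $(fg)h\defined$ iff $f(gh)\defined$, not that $fg\defined$ and $gh\defined$ together force $(fg)h\defined$; for that you need the auxiliary fact $s(fg)=s(g)$ (from (U) and (A), not (F)) combined with the pullback description of $G_2$, so your attribution of all defined-ness bookkeeping to (F) is slightly off there.
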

\begin{proof}
  The proof consists of routine verifications that the maps $e,s,t,i$ indeed satisfy the
  axioms for a groupoid. The most interesting one has already been
  dealt with in Lemma~\ref{lem:inverses}.
\end{proof}

\subsection{From groupoids to relative Frobenius algebras}

For the rest of this subsection, fix a groupoid 
\[
  \cat{G}=\left(\xymatrix@C+3ex{
    G_0 \ar|-{e}[r] 
    & G_1 \ar@(ul,ur)|-{i} \ar@<-1.2ex>|-{s}[l] \ar@<1.2ex>|-{t}[l] 
    & G_1 \times_{G_0} G_1 \ar|-{m}[l]
  }\right).
\]

\begin{definition}\label{def:gpdtofrob}
  For a groupoid $\cat{G}$, define $X=G_1$, and let $m \colon G_1 \times G_1 \relto G_1$ be
  the graph of the function $m$.
\end{definition}

We will prove that $m$ is a relative Frobenius algebra. For starters, it follows directly from associativity of composition in the groupoid $\cat{G}$ that $m$ satisfies (A).

\begin{lemma}
  The morphism $m$ of $\Cat{Rel}$ satisfies (U).
\end{lemma}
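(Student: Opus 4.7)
The plan is to take the morphism $u \colon 1 \relto X$ to be (the graph of) the subset $U := e(G_0) \subseteq G_1 = X$ of identity arrows in the groupoid $\cat{G}$, and to verify both equations $m \after (u \times 1) = 1 = m \after (1 \times u)$ in $\Cat{Rel}$.

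Since $m$ is the graph of the partial composition of $\cat{G}$, one computes that $m \after (1 \times u)$ is the relation $\{(f, fu) \mid f \in G_1,\ u \in U,\ fu \defined\}$. Equality with the diagonal on $G_1$ therefore amounts to two statements: first, for every $f \in G_1$ there exists $u \in U$ with $fu \defined$ and $fu = f$; second, whenever $f \in G_1$ and $u \in U$ satisfy $fu \defined$, we have $fu = f$. The equation $m \after (u \times 1) = 1$ translates symmetrically on the left.

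I would establish the existence clause using the unit axiom of the internal category $\cat{G}$: for $f \in G_1$, set $x = s(f)$ and $y = t(f)$ in $G_0$; then $e(x), e(y) \in U$, and the axioms of an internal category yield $f \cdot e(x) = f = e(y) \cdot f$. For the constraint clause, suppose $u = e(z) \in U$ with $fu \defined$; the fact that composition in $\cat{G}$ is only defined on the pullback $G_1 \times_{G_0} G_1$ forces $s(f) = t(e(z)) = z$, so $fu = f \cdot e(s(f)) = f$. The left-hand version is symmetric.

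No serious obstacle is expected: the statement is essentially a direct translation of the unit law of the internal category $\cat{G}$, together with the fact that the domain of $m$ is precisely $G_1 \times_{G_0} G_1$, into the relational language via the graph of composition. The only mildly delicate point is ensuring that the subset $U = e(G_0)$ is used on \emph{both} sides of $m$; this is automatic because the identity arrows serve as both left and right units in an internal category.
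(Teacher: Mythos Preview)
Your proposal is correct and follows essentially the same approach as the paper: both take $u$ to be the subset $e(G_0) \subseteq G_1$ and verify the unit equations directly from the unit axiom of $\cat{G}$ together with the fact that the domain of $m$ is the pullback $G_1 \times_{G_0} G_1$. The only difference is presentational: the paper collapses the computation of $m \after (u \times 1)$ into a single line, while you explicitly separate the ``existence'' and ``constraint'' halves of the relational equality, which makes the role of the pullback condition more visible.
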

\begin{proof}
  Define a relation $u \colon 1 \relto X$ by $u = \{ (*,e(x)) \mid x \in G_0 \}$.  Then
  \begin{align*}
    m \after (u \times 1) 
    & = \{ (f,e(x)f) \mid f \in G_1, x=t(f) \in G_0 \} \\
    & = \{ (f,et(f)f) \mid f \in G_1 \} = 1.
 \end{align*}
  The symmetric condition also holds, and so (U) is satisfied.
\end{proof}

\begin{lemma}
  The morphism $m$ of $\Cat{Rel}$ satisfies (M).
\end{lemma}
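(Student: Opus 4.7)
The plan is to unfold $m \after m^\dag$ as a relation on $G_1$ and verify that it equals the identity. Since $m$ is the graph of the composition function, its converse $m^\dag$ relates each $f \in G_1$ to all composable pairs $(g,h) \in G_1 \times_{G_0} G_1$ with $gh = f$. Postcomposing with $m$ then yields
\[
  m \after m^\dag = \{ (f,f') \in G_1 \times G_1 \mid \exists (g,h) \in G_1 \times_{G_0} G_1 .\, gh = f \text{ and } gh = f' \}.
\]
Because $m$ is single-valued, any such witness forces $f = f'$, so $m \after m^\dag$ is already contained in the diagonal of $G_1 \times G_1$.

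For the reverse inclusion I would exhibit, for each $f \in G_1$, an explicit composable pair whose composite is $f$. The identity morphisms of $\cat{G}$ furnish one: take $g = f$ and $h = e(s(f))$. Then $s(g) = s(f) = t(e(s(f))) = t(h)$, so the pair lies in $G_1 \times_{G_0} G_1$, and the right unit law of the groupoid gives $gh = f \cdot e(s(f)) = f$. Hence $(f,f) \in m \after m^\dag$ for every $f \in G_1$, completing the proof of (M).

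There is no real obstacle here: the statement is essentially the combination of two elementary facts about groupoids, namely that composition is a (partial) function and that every morphism admits a right identity. Consequently I expect the written-out argument to be only a line or two.
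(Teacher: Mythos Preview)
Your proposal is correct and essentially identical to the paper's proof: both compute $m \after m^\dag$ explicitly and use an identity morphism to witness each $(f,f)$, the only cosmetic difference being that you use the right identity $h = e(s(f))$ while the paper uses the left identity $h = e(t(f))$.
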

\begin{proof}
  We have
  \[
    m \after m^\dag = \{ (f,f) \in G_1^2 \mid \exists g,h \in G_2 .\, s(h)=t(g) ,\, f=hg \}.
  \]
  Because we can always take $g=f$ and $h=e(t(f))$, this relation is equal to $\{(f,f) \in
  G_1^2 \mid f \in G_1\}=1$. Thus (M) is satisfied.
\end{proof}

\begin{lemma}
  The morphism $m$ of $\Cat{Rel}$ satisfies (F).
\end{lemma}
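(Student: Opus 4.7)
The plan is to verify both halves of (F) by unpacking each relational composite as an explicit subset of $(G_1 \times G_1) \times (G_1 \times G_1)$ and checking mutual equality, in the manner already used in the paragraphs preceding the definition of (F). Routine unfolding of relational composition gives that $m^\dag \after m$ is the set of $((a,b),(c,d))$ with $ab \defined$, $cd \defined$ and $ab = cd$; that $(1 \times m) \after (m^\dag \times 1)$ is the set of those admitting some $e \in G_1$ with $c = ae$ and $b = ed$; and that $(m \times 1) \after (1 \times m^\dag)$ is the set of those admitting some $e$ with $a = ce$ and $d = eb$.

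Inclusion of each outer expression into the middle one is immediate from associativity in $\cat{G}$: if $c = ae$ and $b = ed$, then $ab = (ae)d = a(ed) = cd$, and the other case is symmetric.

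For the reverse inclusions I would use the inverse $i$ of the groupoid $\cat{G}$. Assuming $ab = cd$, the equalities $t(a) = t(ab) = t(cd) = t(c)$ and $s(b) = s(ab) = s(cd) = s(d)$ ensure that both $i(a)c$ and $b\,i(d)$ are defined. Post-composing $ab = cd$ with $i(d)$ gives $c = ab\,i(d)$, and pre-composing with $i(a)$ then gives $i(a)c = i(a)ab\,i(d) = b\,i(d)$ after the identity $i(a)a = e(t(b))$ is absorbed. Setting $e$ equal to this common value, unitality yields $ae = a\,i(a)c = c$ and $ed = b\,i(d)d = b$, supplying the witness for the first outer containment. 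Symmetrically, $e := i(c)a = d\,i(b)$ witnesses the second.

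The only real difficulty is the systematic source/target bookkeeping needed to confirm that every composition appearing in the construction of $e$ is actually defined in $\cat{G}$; once this accounting is made from the identities $t(a) = t(c)$ and $s(b) = s(d)$ forced by $ab = cd$, each containment collapses to a one-line calculation using associativity, units, and inverses.
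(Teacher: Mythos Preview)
Your proof is correct and follows essentially the same route as the paper's: unfold the three relational composites explicitly, use associativity for the easy inclusions, and construct the witness $e$ via the groupoid inverse (the paper simply takes $e = bd^{-1}$, which coincides with your $b\,i(d) = i(a)c$). One clerical slip worth fixing: your explicit descriptions of $(1 \times m) \after (m^\dag \times 1)$ and $(m \times 1) \after (1 \times m^\dag)$ are interchanged, but since (F) asserts that both equal $m^\dag \after m$ this does not affect the argument.
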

\begin{proof}
  First compute
  \begin{align*}
    m^\dag \after m & = \{ ((a,b),(c,d)) \in G_2^2 \mid ab=cd \}, \\
    (m \times 1) \after (1 \times m^\dag) & = \{ ((a,b),(c,d)) \in G_2^2 \mid \exists e \in G_1 .\, ed=b,\, ae=c \}.
 \end{align*}
  If $ed=b$ and $ae=c$, then $cd=aed=ab$. Hence $(m \times 1) \after (1 \times m^\dag)
  \subseteq m^\dag \after m$. Conversely, suppose that $((a,b),(c,d)) \in m^\dag \after m$.
  Taking $e=bd^{-1}$, then $ed=bdd^{-1}=b$, and $ae=abd^{-1}=cdd^{-1}=c$. Therefore
  $m^\dag \after m \subseteq (1 \times m^\dag) \after (m \times 1)$. The symmetric
  condition is verified analogously. Thus (F) is satisfied.
\end{proof}

\begin{theorem}\label{gpdisfrob}
  If $\cat{G}$ is a groupoid, then $m$ is a relative Frobenius algebra.
  \qed
\end{theorem}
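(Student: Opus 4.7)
The plan is to verify the four defining conditions (F), (M), (A), and (U) for the morphism $m$ in $\Cat{Rel}$. Three of these have already been treated as separate lemmas immediately preceding the theorem, so the argument reduces to collecting those results together with the observation that (A) is automatic.

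Condition (A) is immediate: since $m$ is the graph of the groupoid composition function, $m \after (1 \times m)$ and $m \after (m \times 1)$ are the graphs of $(f,g,h) \mapsto f(gh)$ and $(f,g,h) \mapsto (fg)h$ respectively (defined on the appropriate iterated pullback), and these coincide by associativity of composition in $\cat{G}$. The remaining conditions (U), (M), and (F) are exactly the content of the three preceding lemmas, so I would simply cite them: (U) uses the subset $u = \{(*, e(x)) \mid x \in G_0\}$ as the unit; (M) rests on the trivial factorization $f = e(t(f)) \cdot f$ available for every $f \in G_1$; and (F) uses the existence of inverses in $\cat{G}$ in an essential way, supplying $e = bd^{-1}$ as the middle element that witnesses $ab = cd$.

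There is essentially no obstacle beyond what the lemmas already handle, but I would flag one bookkeeping point: conditions (U) and (F) each consist of two symmetric halves, and the lemmas only spell out one side of each. I would record explicitly that the omitted halves follow by an entirely analogous computation, exploiting the left/right symmetry of the groupoid axioms (and, for (F), the fact that inverses allow both $e = bd^{-1}$ and $e = a^{-1}c$ to realize the required factorization). Once these symmetric duplicates are noted, the theorem is just the conjunction of the four verifications and carries no further content.
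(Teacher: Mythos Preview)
Your proposal is correct and matches the paper's approach exactly: the paper's proof is literally just a \qed, since (A) was noted immediately after Definition~\ref{def:gpdtofrob} and (U), (M), (F) are the three preceding lemmas. Your bookkeeping remark about the symmetric halves is a fair addition, and the paper itself glosses these with phrases like ``the symmetric condition is verified analogously.''
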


\subsection{Functoriality}\label{subsec:frob:functoriality}

Notice that the constructions $m \mapsto \cat{G}$ and $\cat{G} \mapsto
m$ of the previous two sections are each other's inverse. This subsection
proves that the assignments extend to an isomorphism of
categories under various choices of morphisms: one that is natural for
groupoids, one that is natural for relations, and one that is natural
to Frobenius algebras. 
(See also~\cite{coeckepaquettepavlovic:structuralism}.)
We start by considering a choice of morphisms that is natural from the
point of view of relations: namely, morphisms between groupoids are
subgroupoids of the product.

The category $\cat{Rel}$ is \emph{compact closed}, \textit{i.e.}\ 
allows morphisms $\eta_X \colon 1 \to X \times X$ satisfying $(\eta^\dag
\times 1) \after (1 \times \eta) = 1 = (1 \times \eta^\dag) \after
(\eta \times 1)$. Drawing $\eta$ as $\smash{\hbox{\begin{tikzpicture}[scale=0.33]
     \node (0) at (-.5,.5) {};
     \node (1) at (.5,.5) {};
     \draw [thick, in=270, out=270, looseness=1.5] (0) to (1);
  \end{tikzpicture}}}$, this property graphically becomes the following.
\[
   \vcenter{\hbox{\begin{tikzpicture}[scale=0.5]
		\node [style=none] (0) at (-1, 1) {};
		\node [style=none] (1) at (-1, 0) {};
		\node [style=none] (2) at (0, 0) {};
		\node [style=none] (3) at (1, 0) {};
		\node [style=none] (4) at (1, -1) {};
		\draw [thick, bend right=90, looseness=1.75] (1.center) to (2.center);
		\draw [thick] (0.center) to (1.center);
		\draw [thick] (3.center) to (4.center);
		\draw [thick, bend left=90, looseness=1.75] (2.center) to (3.center);
   \end{tikzpicture}}}
   \; = \;
   \vcenter{\hbox{\begin{tikzpicture}[scale=0.5]
		\node [style=none] (0) at (0, 1) {};
		\node [style=none] (1) at (0, -1) {};
		\draw [thick] (0.center) to (1.center);
   \end{tikzpicture}}}
   \; = \;
   \vcenter{\hbox{\begin{tikzpicture}[scale=0.5]
		\node [style=none] (0) at (1, 1) {};
		\node [style=none] (1) at (1, 0) {};
		\node [style=none] (2) at (0, 0) {};
		\node [style=none] (3) at (-1, 0) {};
		\node [style=none] (4) at (-1, -1) {};
		\draw [thick, bend left=90, looseness=1.75] (1.center) to (2.center);
		\draw [thick] (0.center) to (1.center);
		\draw [thick] (3.center) to (4.center);
		\draw [thick, bend right=90, looseness=1.75] (2.center) to (3.center);
   \end{tikzpicture}}}
\]
In fact, any relative Frobenius algebra induces such a compact
structure on $X$, by $\eta = m^\dag \after u$. There is a canonical 
choice of compact structure, that we use from now on, namely
$\eta = \{(*,(x,x)) \mid x \in X\}$. It is induced by the relative
Frobenius algebra corresponding to the discrete groupoid on $X$, and
therefore has a universal categorical characterization. It follows
that morphisms $r \colon X \relto Y$ in $\cat{Rel}$ have 
transposes $\name{r} = (1 \times r) \after \eta = \{(*,(x,y)) \mid (x,y) \in 
r\} \colon 1 \relto X \times Y$. Furthermore, the dagger in $\cat{Rel}$ is compatible with 
the symmetric monoidal structure, giving a natural swap isomorphism
$\swapmap \colon X \times Y \to Y \times X$ with
$\swapmap^{-1}=\swapmap^\dag$. 

\begin{definition}\label{def:relmorphisms}
  The category $\Cat{Frob}(\Cat{Rel})\rel$ has relative Frobenius
  algebras as objects. A morphism $(X,m_X) \to (Y,m_Y)$ is a morphism
  $r \colon X \relto Y$ in $\Cat{Rel}$ satisfying
  \begin{align}
    (m_X \times m_Y) \after (1 \times \swapmap \times 1) \after
    (\name{r} \times \name{r}) & = \name{r}, \tag{R} \\
    (r \times \eta^\dag) \after (m_X^\dag \times 1) \after (u_X \times 1)
    & = (u_Y^\dag \times 1) \after (m_Y \times 1) \after (r \times \eta)
    \tag{I}
  \end{align}
  These conditions translate into string diagrams as follows.
  \[
    \vcenter{\hbox{\begin{tikzpicture}[scale=0.5]
        \begin{pgfonlayer}{nodelayer}
        	\node [style=none] (0) at (0, 1) {};
        	\node [style=none] (1) at (1, 1) {};
        	\node [style=none] (2) at (0, 0) {};
        	\node [style=none] (3) at (1, 0) {};
                \node [thick, style=box] (4) at (1,.3) {$r$};
        \end{pgfonlayer}
        \begin{pgfonlayer}{edgelayer}
        	\draw [thick, bend left=90, looseness=2.00] (3.north) to (2.north);
        	\draw [thick] (1) to (3);
        	\draw [thick] (0) to (2);
        \end{pgfonlayer}
      \end{tikzpicture}}}
      \;\; \stackrel{\text{(R)}}{=} \;\;
      \vcenter{\hbox{\begin{tikzpicture}[scale=0.5]
        \begin{pgfonlayer}{nodelayer}
        	\node [style=none] (0) at (-0.5, 2.5) {};
        	\node [style=none] (1) at (1.5, 2.5) {};
        	\node [thick, style=dot] (2) at (-0.5, 1.75) {};
        	\node [thick, style=dot] (3) at (1.5, 1.75) {};
        	\node [style=none] (4) at (-1, 1) {};
        	\node [style=none] (5) at (0, 1) {};
        	\node [style=none] (6) at (1, 1) {};
        	\node [style=none] (7) at (2, 1) {};
        	\node [style=none] (8) at (-1, 0) {};
        	\node [style=none] (9) at (0, 0) {};
        	\node [style=none] (10) at (1, 0) {};
        	\node [style=none] (11) at (2, 0) {};
        	\node [thick, style=box] (12) at (0, -0.3) {$r$};
        	\node [thick, style=box] (13) at (2, -0.3) {$r$};
        	\node [style=none] (14) at (-1, -.5) {};
        	\node [style=none] (15) at (0, -.5) {};
        	\node [style=none] (16) at (1, -.5) {};
        	\node [style=none] (17) at (2, -.5) {};
        \end{pgfonlayer}
        \begin{pgfonlayer}{edgelayer}
        	\draw [thick, bend left=90, looseness=2.00] (17.center) to (16.center);
        	\draw [thick] (10.center) to (16.center);
        	\draw [thick] (7.center) to (11.center);
        	\draw [thick] (0.center) to (2.center);
        	\draw [thick] (3.center) to (1.center);
        	\draw [thick, in=180, out=90] (6.center) to (3.center);
        	\draw [thick, in=180, out=90] (4.center) to (2.center);
        	\draw [thick, in=90, out=270] (11.center) to (17.center);
        	\draw [thick, in=90, out=0] (2.center) to (5.center);
        	\draw [thick, bend left=90, looseness=2.00] (15.center) to (14.center);
        	\draw [thick, bend left=45] (3.center) to (7.center);
        	\draw [thick] (8.center) to (14.center);
        	\draw [thick] (9.center) to (15.center);
        	\draw [thick, in=90, out=270] (6.center) to (9.center);
        	\draw [thick] (8.center) to (4.center);
        	\draw [thick, cross, in=90, out=-90] (5.center) to (10.center);
        \end{pgfonlayer}
      \end{tikzpicture}}}
      \qquad \qquad
      \vcenter{\hbox{\begin{tikzpicture}[thick, scale=0.33]
		\node [style=none] (0) at (-1, 2) {};
		\node [style=box] (1) at (-1, 1) {$r$};
		\node [style=none] (2) at (2, 1) {};
		\node [style=none] (3) at (-1, 0) {};
		\node [style=none] (4) at (1, 0) {};
		\node [style=none] (5) at (3, 0) {};
		\node [style=dot] (6) at (0, -1) {};
		\node [style=dot] (7) at (0, -2) {};
		\node [style=none] (8) at (3, -2) {};
		\draw (6) to (7);
		\draw [bend left=45, looseness=1.25] (2.center) to (5.center);
		\draw (5.center) to (8);
		\draw (0) to (1);
		\draw [bend right=45, looseness=1.25] (3.center) to (6.center);
		\draw [bend right=45, looseness=1.25] (6.center) to (4.center);
		\draw (1) to (3.center);
		\draw [bend left=45, looseness=1.25] (4.center) to (2.center);
		\node [style=dot] (6) at (0, -1) {};
      \end{tikzpicture}}}
      \; \stackrel{\text{(I)}}{=} \;
      \vcenter{\hbox{\begin{tikzpicture}[thick, scale=0.33]
		\node [style=none] (0) at (-1, -2) {};
		\node [style=box] (1) at (-1, -1) {$r$};
		\node [style=none] (2) at (2, -1) {};
		\node [style=none] (3) at (-1, 0) {};
		\node [style=none] (4) at (1, 0) {};
		\node [style=none] (5) at (3, 0) {};
		\node [style=dot] (6) at (0, 1) {};
		\node [style=dot] (7) at (0, 2) {};
		\node [style=none] (8) at (3, 2) {};
		\draw (6) to (7);
		\draw [bend right=45, looseness=1.25] (2.center) to (5.center);
		\draw (5.center) to (8);
		\draw (0) to (1);
		\draw [bend left=45, looseness=1.25] (3.center) to (6.center);
		\draw [bend left=45, looseness=1.25] (6.center) to (4.center);
		\draw (1) to (3.center);
		\draw [bend right=45, looseness=1.25] (4.center) to (2.center);
		\node [style=dot] (6) at (0, 1) {};
      \end{tikzpicture}}}
  \]
\end{definition}

\begin{proposition}
  $\cat{Frob}(\cat{Rel})\rel$ is a well-defined category.
\end{proposition}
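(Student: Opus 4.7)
The plan is to verify the two category axioms absent from the ambient $\cat{Rel}$-structure: that the identity relation $1_X$ satisfies (R) and (I) for every relative Frobenius algebra $(X,m_X)$, and that the pair of conditions (R), (I) is closed under composition in $\cat{Rel}$. Associativity and the unit laws themselves are then inherited from $\cat{Rel}$.

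For identities, note that $\name{1_X}=\eta_X$, so (R) collapses to the equation $(m_X\times m_X)\after(1\times\swapmap\times 1)\after(\eta_X\times\eta_X)=\eta_X$. Set-theoretically the left-hand side is $\{(*,(ab,ab))\mid ab\defined\}$, and (U) exhibits each $x\in X$ as $xu$ for some $u\in U$, so this coincides with $\eta_X$; graphically the same verification uses one application of (F) and one of (U). For (I) with $r=1_X$, both sides collapse to the Frobenius pairing $\eta^\dag\after(m_X\times 1)\after(1\times\eta)$, whose invariance under $\swapmap$ is a standard consequence of (F), (A), and (U).

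For composition, given $r\colon X\relto Y$ and $s\colon Y\relto Z$ satisfying (R) and (I), observe that $\name{s\after r}=(1_X\times s)\after\name{r}$. Substituting this into the left side of (R) for $s\after r$, the outer multiplication $m_X\times m_Z$ splits into an $X$-part that commutes trivially with the two $s$ factors and a $Z$-part that commutes past them using (R) for $s$, after which the whole equation reduces to (R) for $r$. An analogous rewriting that invokes (I) for $s$ followed by (I) for $r$, with a single use of (F) on $m_Y$ to bend the intermediate $Y$-strand, yields (I) for $s\after r$.

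The main obstacle is the bookkeeping in the (I) composition step, where three multiplications $m_X,m_Y,m_Z$ and two compact structures $\eta_X,\eta_Z$ interact simultaneously. A cleaner route, which I would prefer in a fair copy, is to invoke Theorems~\ref{frobisgpd} and~\ref{gpdisfrob} to recast morphisms in $\cat{Frob}(\cat{Rel})\rel$ concretely. Unwinding (R) and (I) set-theoretically shows that such a morphism amounts to a wide sub-groupoid of the product groupoid on $X\times Y$, with (R) encoding closure under multiplication and (I) encoding closure under inversion. The identity morphism becomes the diagonal sub-groupoid, and relational composition corresponds to composition of such sub-groupoids, for which closure under both operations is then manifest.
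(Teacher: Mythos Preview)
Your graphical reduction for closure of (R) under composition does not go through. After writing $\name{s\after r}=(1_X\times s)\after\name{r}$ and pushing the swap past $1_X\times s\times 1_X\times s$, the left-hand side of (R) becomes
\[
  \big(m_X \times (m_Z\after(s\times s))\big)\after(1\times\swapmap\times 1)\after(\name{r}\times\name{r}),
\]
and to reduce this to (R) for $r$ you would need $m_Z\after(s\times s)=s\after m_Y$. That is the multiplication-preserving condition defining $\Cat{Frob}(\Cat{Rel})$, not condition (R) for $s$; condition (R) is an equation between states $1\relto Y\times Z$ and does not let you commute $m_Z$ past $s\times s$ in this way. So the step ``commutes past them using (R) for $s$'' is the missing idea, not mere bookkeeping. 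Similarly, your description of (I) for $r=1_X$ as ``both sides collapse to the Frobenius pairing $\eta^\dag\after(m_X\times 1)\after(1\times\eta)$'' is ill-typed: both sides of (I) are morphisms $X\relto X$, whereas the expression you write is $X\relto 1$.

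The paper proceeds differently. It first unpacks (I) concretely as ``$(x,y)\in r$ iff $(x^{-1},y^{-1})\in r$'', which makes closure of (I) under composition immediate. For (R) it argues set-theoretically: starting from $(x'',z'')\in s\after r$ with witness $y''$, it uses (I) to get $(x''^{-1},y''^{-1})\in r$ and then (R) to get the identity pair $(1,1)\in r$, so that $(x'',z'')=(x''\cdot 1,\,z''\cdot 1)$ exhibits the required decomposition. This production of identity elements from (R)+(I) is exactly what your purely diagrammatic rewrite lacks. Your suggested ``cleaner route'' via Theorems~\ref{frobisgpd} and~\ref{gpdisfrob} and the subgroupoid interpretation is in fact how the paper continues in Theorem~\ref{gpdfrobext}, but that theorem presupposes the present proposition, so invoking it here would be circular; the closure argument for composition still has to be carried out directly.
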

\begin{proof}
  Clearly, identities $1_X = \{(x,x) \mid x \in X \} \colon X \relto X$
  satisfy (R) and (I). Observe that the composition $(1 \times \eta^\dag) \after
  (m^\dag \times 1) \after (u \times 1) \colon X \relto X$ is the
  relation $\{(x,x^{-1}) \mid x \in X\}$, where $x^{-1}$ is the
  inverse of $x$ when regarding $X$ as the set of morphisms of a
  groupoid as per Theorem~\ref{frobisgpd}. So (I) means that $(x,y) \in
  r$ if and only if $(x^{-1}, y^{-1}) \in r$. Now, if $r \colon X
  \relto Y$ and $s \colon Y \relto Z$ satisfy (R) and (I), then so
  does $s \after r$: 
  \begin{align*}
    \name{s \after r}
    & = \{(*, x'',z'') \in 1 \times X \times Z \mid \exists y'' \in Y
    .\, (x'',y'') \in r, (y'',z'') \in s \} \\
    & = \{(*, xx', zz') \mid x,x' \in X, z,z' \in Z, \exists y,y' \in Y . \\
    & \qquad\qquad\qquad\quad\; (x,y) \in r, (x',y') \in r, (y,z) \in s, (y',z') \in s \} \\
    & = (m_X \times m_Z) \after (1 \times \swapmap \times 1) \after
    (\name{s \after r} \times \name{s \after r}).
  \end{align*}
  Let us justify the second equation. If $(x'',y'') \in r$, then
  $(x''^{-1},y''^{-1}) \in r$ by (I), and hence $(1,1) \in r$ by
  (R). Hence we may take $x=x''$, $x'=1$, $y=y''$ and $y'=1$.
\end{proof}

\begin{definition}
  The category $\Cat{Gpd}\rel$ has groupoids as objects. Morphisms $\cat{G}
  \to \cat{H}$ are subgroupoids of $\cat{G} \times \cat{H}$.
\end{definition}

That this is a well-defined category will follow from the following
theorem. Identities are the diagonal subgroupoids, and composition of
subgroupoids $\cat{R} \subseteq \cat{G} \times \cat{G}'$ and $\cat{S}
\subseteq \cat{G}' \times \cat{G}''$ is the groupoid $S_1 \after R_1
\rightrightarrows S_0 \after R_0$.

\begin{theorem}\label{gpdfrobext}
  There is an isomorphism of categories $\Cat{Frob}(\Cat{Rel})\rel
  \cong \Cat{Gpd}\rel$.
\end{theorem}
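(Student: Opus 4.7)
The plan is to promote the object-level bijection from Theorems~\ref{frobisgpd} and~\ref{gpdisfrob} to an isomorphism of hom-sets, and then check compatibility with identities and composition. Write $\cat{G}$ and $\cat{H}$ for the groupoids associated to relative Frobenius algebras $(X,m_X)$ and $(Y,m_Y)$. The key step is to show that a relation $r \colon X \relto Y$ satisfies (R) and (I) if and only if $r$, viewed as a subset of $G_1 \times H_1 = X \times Y$, is the arrow set of a subgroupoid of $\cat{G} \times \cat{H}$.

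To this end, I would first unpack (R) and (I) pointwise, in the spirit of the concrete readings of (F), (M), (A), (U) given after Definition~2.4. Reading off the string diagram for (R) yields
\[
  r = \{(x_1 x_2,\, y_1 y_2) \mid (x_1,y_1),\, (x_2,y_2) \in r,\ x_1 x_2 \defined,\ y_1 y_2 \defined\},
\]
while the proof of the preceding proposition already identifies (I) with the condition that $(x,y) \in r$ iff $(x^{-1}, y^{-1}) \in r$. The right-to-left inclusion in (R) is exactly closure of $r$ under the partial composition in $\cat{G}\times\cat{H}$, and (I) is exactly closure under inversion. Combining these, for any $(x,y) \in r$ one has $(t(x),t(y)) = (x,y)\cdot(x^{-1},y^{-1}) \in r$ and similarly $(s(x),s(y)) \in r$, so $r$ contains all identities at its sources and targets, and is a subgroupoid of $\cat{G} \times \cat{H}$. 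Conversely, the arrow set of any subgroupoid satisfies (I) and the right-to-left inclusion of (R) by closure, and the left-to-right inclusion of (R) via the identity decomposition $(x,y) = (x,y) \cdot (s(x),s(y))$, using that $(s(x),s(y))$ lies in the subgroupoid.

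Functoriality is then immediate: the identity relation $\{(x,x) \mid x \in X\}$ corresponds to the diagonal subgroupoid of $\cat{G} \times \cat{G}$, and the relational composite $s \after r$ has arrow part $S_1 \after R_1$ and object part $S_0 \after R_0$, matching the composition stipulated in $\Cat{Gpd}\rel$. The main subtlety I expect to encounter is that (R) simultaneously encodes closure under composition and factorization through identities; extracting identities from (R) alone requires (I) to supply inverses, exactly as in the preceding proposition. Once this interplay is noted, the equivalence of the two categories is a routine unpacking.
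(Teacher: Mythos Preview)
Your argument is correct and takes a somewhat different route from the paper's. The paper observes that condition~(R) says exactly that $m_r = (m_X \times m_Y) \after (1 \times \swapmap \times 1)$ restricts to a well-defined morphism $r \times r \relto r$, then verifies that $(r,m_r)$ is itself a relative Frobenius algebra (checking (M), (A), (F), and a unit from $r \cap (U_X \times U_Y)$), so that Theorem~\ref{frobisgpd} immediately furnishes a groupoid structure on $r$, which is then seen to be a subgroupoid of $\cat{G} \times \cat{H}$. You instead unpack (R) and (I) concretely as closure of $r$ under the product composition and inversion, and extract the needed identities by hand via $(t(x),t(y)) = (x,y)\cdot(x^{-1},y^{-1})$. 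The paper's route is conceptually uniform---morphisms are objects of the same kind, so the object-level theorem applies directly---whereas yours is more elementary and sidesteps the reverification of the Frobenius axioms for $m_r$. One small point you pass over: showing that composition in $\Cat{Gpd}\rel$ matches relational composition requires checking that the object part of the composite subgroupoid is $S_0 \after R_0$, \textit{i.e.}\ that when $(x,z) \in s \after r$ with $x,z$ identities, the witness $y$ may be chosen in $H_0$; this follows from your closure properties by replacing $y$ with $t(y)$.
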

\begin{proof}
  Let $(X,m_X)$ and $(Y,m_y)$ be relative Frobenius algebras, inducing
  groupoids $\cat{G}$ and $\cat{H}$.
  First, notice that if $r \colon X \relto Y$ satisfies (R), then
  \begin{align*}
    m_r & = (m_X \times m_Y) \after (1 \times \swapmap \times 1) \\
    & = \{ (((a,b),(c,d)),(ac,bd)) \mid a,b,c,d \in X \}
    \colon r \times r \relto r
  \end{align*}
  is a well-defined morphism in $\Cat{Rel}$. In fact, since $(X,m_X)$ and
  $(Y,m_Y)$ are relative Frobenius algebras, so is $(r,m_r)$: one
  readily verifies that it satisfies (M), (A), and (F). Also, (U) is
  satisfied by the intersection $1 \relto R$ of  $r$ with $U_X \times
  U_Y$. Theorem~\ref{frobisgpd} thus shows that $r$ induces a groupoid
  $\cat{R}$. It is a subgroupoid of $\cat{G} \times \cat{H}$: we have
  $R_1 \subseteq (G \times H)_1$ by construction, and if $u \in U_R$,
  then $u=u^{-1}$, so $u \in (G \times H)_0$. The structure maps
  of $\cat{R}$ are easily seen to be restrictions of those of $\cat{G}
  \times \cat{H}$.

  Conversely, if $\cat{R}$ is a subgroupoid of $\cat{G} \times
  \cat{H}$, then clearly $R_1 \subseteq X \times Y$ is a morphism in
  $\cat{Rel}$ satisfying (R) and (I). It now suffices to observe that these
  constructions are inverses.
\end{proof}

Next, we consider a choice of morphisms that is natural to groupoids,
namely functors. This entails dealing with functions. Fortunately,
functions can be characterized among relations purely
categorically. The category $\cat{Rel}$ is in fact a 2-category, 
where there is a single 2-cell $r \Rightarrow s$ when $r \subseteq
s$. Hence it makes sense to speak of adjoints of morphisms in
$\Cat{Rel}$. A morphism has a right adjoint if and only if it is (the
graph of) a function. 

\begin{definition}
  The category $\Cat{Frob}(\Cat{Rel})$ has relative Frobenius algebras
  as objects. Morphisms $(X,m_X) \to (Y,m_Y)$ are morphisms $r \colon
  X \relto Y$ that satisfy (I) and preserve the multiplication: $r \after m_X = m_Y
  \after (r \times r)$. 
  \[\vcenter{\hbox{\begin{tikzpicture}[thick, scale=0.5]
    \node (0) at (0,2) {};
    \node [box] (1) at (0,1) {$r$};
    \node [dot] (2) at (0,0) {};
    \node (3) at (-.75,-1) {};
    \node (4) at (.75,-1) {};
    \draw (0) to (1) to (2);
    \draw [out=180, in=90] (2) to (3);
    \draw [out=0, in=90] (2) to (4);
  \end{tikzpicture}}}
  =
  \vcenter{\hbox{\begin{tikzpicture}[thick, scale=0.5]
    \node (0) at (0,1) {};
    \node [dot] (2) at (0,0) {};
    \node [box] (3) at (-.75,-1) {$r$};
    \node [box] (4) at (.75,-1) {$r$};
    \node (5) at (-.75,-2) {};
    \node (6) at (.75,-2) {};
    \draw (0) to (2);
    \draw [out=180, in=90] (2) to (3);
    \draw [out=0, in=90] (2) to (4);
    \draw (3) to (5);
    \draw (4) to (6);
  \end{tikzpicture}}}\]
  We write $\Cat{Frob}(\Cat{Rel})\func$ for the subcategory of all
  morphisms $r$ that have a right adjoint and allow a 2-cell $r \after
  u_X \Rightarrow u_Y$.
\end{definition}

\begin{lemma}
  Morphisms in $\Cat{Frob}(\Cat{Rel})$ satisfy (R). Hence we have
  inclusions
  \[
    \Cat{Frob}(\Cat{Rel})\func
    \hookrightarrow
    \Cat{Frob}(\Cat{Rel})
    \hookrightarrow
    \Cat{Frob}(\Cat{Rel})\rel.
  \]
\end{lemma}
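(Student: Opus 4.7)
The plan is to unpack both (R) and the preservation equation $r \after m_X = m_Y \after (r \times r)$ as concrete set-theoretic statements about $r \subseteq X \times Y$, then derive (R) from preservation alone. Using the same translation to Kleene equations as in the unpacking of the relative Frobenius algebra axioms, (R) amounts to the equality
\[
  r = \{ (x_1 x_2, y_1 y_2) \mid (x_1, y_1), (x_2, y_2) \in r,\ x_1 x_2 \defined,\ y_1 y_2 \defined\},
\]
while preservation unfolds to the biconditional, for all $x_1, x_2 \in X$ and $y \in Y$:
\[
  x_1 x_2 \defined \text{ and } (x_1 x_2, y) \in r
  \;\iff\;
  \exists y_1, y_2 \in Y .\ (x_1, y_1), (x_2, y_2) \in r,\ y_1 y_2 = y.
\]

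For the $\supseteq$ inclusion of the first display, I would take $(x_1, y_1), (x_2, y_2) \in r$ with both $x_1 x_2$ and $y_1 y_2$ defined; the $(\Leftarrow)$ direction of the biconditional, applied with $y = y_1 y_2$, immediately gives $(x_1 x_2, y_1 y_2) \in r$. For the $\subseteq$ inclusion, I fix $(x, y) \in r$ and use (U) on the $X$-side to pick $u \in U_X$ with $xu = x$. Then $(x, u, y)$ lies in $r \after m_X$, so the $(\Rightarrow)$ direction produces $y_1, y_2 \in Y$ with $(x, y_1), (u, y_2) \in r$ and $y_1 y_2 = y$; the witnesses $(x_1, x_2, y_1, y_2) = (x, u, y_1, y_2)$ then exhibit $(x, y) = (xu, y_1 y_2)$ as an element of the right-hand set.

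Condition (I) plays no role in this argument, which is mildly surprising but not unreasonable: (R) is itself a closure-and-decomposition condition driven by multiplication. The only point that genuinely requires care is the two-way unfolding of the preservation equation, in particular keeping track of which existential quantifiers come from $r \after m_X$ and which from $m_Y \after (r \times r)$. Given the lemma, the displayed chain of inclusions is automatic: a morphism of $\Cat{Frob}(\Cat{Rel})$ satisfies (R) by the lemma and (I) by definition, so it lies in $\Cat{Frob}(\Cat{Rel})\rel$; and $\Cat{Frob}(\Cat{Rel})\func$ is carved out as a subcategory of $\Cat{Frob}(\Cat{Rel})$ by construction.
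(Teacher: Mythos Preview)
Your proof is correct and follows essentially the same approach as the paper: both arguments reduce (R) to the preservation equation together with (U), the latter being used to factor any $x$ as a product $x \cdot u$ (the paper phrases this as ``we can always choose $x=z$ and $y=1$''). The paper presents the argument as a single chain of equalities on the transpose $\name{r}$ rather than splitting into two set-theoretic inclusions, but the underlying content is identical.
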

\begin{proof}
  Let $r \colon X \relto Y$ be a morphism in $\Cat{Frob}(\Cat{Rel})$. Then
  \begin{align*}
    & (m_X \times m_Y) \after (1 \times \swapmap \times 1) \after
    (\name{r} \times \name{r}) \\
    & = (m_X \times m_Y) \after (1 \times r \times r) \after (1 \times
    \swapmap \times 1) \after \{ (*, (x,x,y,y)) \mid x,y \in X \} \\
    & = (1 \times r) \after (m_X \times m_X) \after \{(*,(x,y,x,y))
    \mid x,y \in X \} \\
    & = (1 \times r) \after \{(*,(xy,xy)) \mid xy \defined \} \\
    & = (1 \times r) \after \{(*,(z,z)) \mid z \in X \} \\
    & = \name{r},
  \end{align*}
  because we can always choose $x=z$ and $y=1$.
\end{proof}

Write $\Cat{Gpd}$ for the category of groupoids and functors.

\begin{theorem}\label{gpdfrobfunc}
  There is an isomorphism of categories $\Cat{Frob}(\Cat{Rel})\func \cong \Cat{Gpd}$.
\end{theorem}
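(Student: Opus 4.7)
The plan is to lift the object-level bijection already furnished by Theorems~\ref{frobisgpd} and~\ref{gpdisfrob} to a bijection on morphisms and then verify compatibility with composition and identities. The key preliminary fact is that a morphism in the 2-category $\Cat{Rel}$ admits a right adjoint if and only if it is (the graph of) a function; hence the underlying relation $r \colon X \relto Y$ of any morphism of $\Cat{Frob}(\Cat{Rel})\func$ is a function $f \colon X \to Y$ between the underlying sets of morphisms of the associated groupoids.

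I would then unpack what the remaining defining conditions on $r$ impose on $f$. The 2-cell $r \after u_X \Rightarrow u_Y$ translates to $f(U_X) \subseteq U_Y$, so $f$ carries identity arrows of $\cat{G}$ to identity arrows of $\cat{H}$; under the identifications $G_0 = U_X$ and $H_0 = U_Y$ this furnishes an object component $f_0 \colon G_0 \to H_0$. Condition (I) gives $f(x^{-1}) = f(x)^{-1}$, and the multiplication-preservation equation $r \after m_X = m_Y \after (r \times r)$ gives $f(ab) = f(a) f(b)$ whenever the composition is defined. Preservation of source and target then follows from preservation of units and inverses (since $s(x)$ can be recovered as $x^{-1}x$, and similarly for $t$), so the resulting data assemble into a functor $F \colon \cat{G} \to \cat{H}$ with $F_1 = f$ and $F_0 = f_0$.

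For the reverse direction, given a functor $F \colon \cat{G} \to \cat{H}$ I would take $r = \{(x, F_1(x)) \mid x \in X\}$: as the graph of a function it has a right adjoint; preservation of identities by $F$ gives the 2-cell; preservation of inverses gives (I); and preservation of composition, combined with the fact that $f(a)f(b)$ being defined pulls back through $f_0$ to force the source/target alignment in $\cat{G}$, gives the multiplication-preservation equation. The two assignments are visibly mutually inverse. Functoriality of the correspondence then reduces to the facts that composition of graphs of functions in $\Cat{Rel}$ coincides with composition of functions in $\Cat{Set}$ and that the identity relation corresponds to the identity functor.

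The part that deserves most care is the bookkeeping around the definedness predicate $\defined$ in the multiplication-preservation equation and its interplay with the units and inverses: one must confirm that the equality $r \after m_X = m_Y \after (r \times r)$ captures exactly the functoriality of $F$ on composable pairs, as mediated by the identifications of $G_0$ with $U_X$ and $H_0$ with $U_Y$. Once this is handled, the remaining verifications are routine and closely parallel those appearing in the proof of Theorem~\ref{gpdfrobext}.
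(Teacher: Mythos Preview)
Your proposal follows essentially the same approach as the paper's proof: both invoke the characterization of right-adjoint morphisms in $\Cat{Rel}$ as functions, read the 2-cell $r \after u_X \Rightarrow u_Y$ as sending $G_0$ into $H_0$, interpret multiplication-preservation as functoriality, and appeal to the coincidence of relational and functional composition for graphs. You are more explicit than the paper about the role of (I) in yielding inverse-preservation and about recovering $s$ and $t$ from $x^{-1}x$ and $xx^{-1}$; the paper's proof does not mention (I) at all, relying entirely on multiplication-preservation for the forward direction and dismissing the converse as ``easy to see''.

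One caution about the reverse direction: your assertion that ``$f(a)f(b)$ being defined pulls back through $f_0$ to force the source/target alignment in $\cat{G}$'' is not true for an arbitrary functor between groupoids, since $f_0$ need not be injective on objects. The equality $r \after m_X = m_Y \after (r \times r)$ demands not only that $f$ preserve composability but also that it \emph{reflect} it, and a functor that collapses distinct objects will fail this. The paper's proof glosses over exactly the same point with ``it is easy to see'', so your approach is no less rigorous than the paper's here; but since you explicitly flag this bookkeeping as the part deserving most care, you should be aware that the step as written does not go through without further hypotheses or a reinterpretation of the multiplication-preservation condition.
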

\begin{proof}
  Let $(X,m_X)$ and $(Y,m_Y)$ be relative Frobenius algebras, inducing
  groupoids $\cat{G}$ and $\cat{H}$. Let $r \colon m_X \to m_Y$ be a
  morphism in $\Cat{Frob}(\Cat{Rel})\func$.
  The condition that $r$ has a right adjoint means it is in fact a
  function $r \colon G_1 \to H_1$. Furthermore, the condition $r
  \after u_X \subseteq u_Y$ means precisely that it sends $G_0$ to
  $H_0$. Finally, the condition that $r$ preserve multiplication makes
  it functorial $\cat{G} \to \cat{H}$, because relational composition
  of graphs coincides with composition of functions.
  Conversely, it
  is easy to see that a functor between groupoids induces a morphism
  in $\Cat{Frob}(\Cat{Rel})\func$. Finally, these two constructions
  are inverse to each other. 
\end{proof}

\begin{corollary}
  The category $\Cat{Gp}$ of groups and homomorphisms is
  isomorphic to the full subcategory of $\Cat{Frob}(\Cat{Rel})\func$
  consisting of those relative Frobenius algebras for which $u$ has
  a right adjoint.
\end{corollary}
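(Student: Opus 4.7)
The plan is to reduce everything to Theorem~\ref{gpdfrobfunc}, which already supplies an isomorphism $\Cat{Frob}(\Cat{Rel})\func \cong \Cat{Gpd}$. Under this isomorphism, the unit $u \colon 1 \relto X$ of a relative Frobenius algebra corresponds to the object inclusion of the associated groupoid; more concretely, $U \subseteq X$ is in bijection with the object set $G_0$. Hence it suffices to identify, on each side, the full subcategory carved out by the condition that $u$ have a right adjoint.

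Recall from the paragraph preceding Definition~3.7 that, in the 2-category $\Cat{Rel}$, a morphism has a right adjoint if and only if it is (the graph of) a total function. Specialising to $u \colon 1 \relto X$: totality forces $U$ to be nonempty, and single-valuedness of a function out of a singleton forces $U$ to have at most one element. Thus $u$ has a right adjoint precisely when $U$ is a singleton, which under the correspondence with $\Cat{Gpd}$ amounts to saying that the associated groupoid has exactly one object.

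It then remains to match the full subcategory of $\Cat{Gpd}$ spanned by one-object groupoids with $\Cat{Gp}$. A one-object groupoid is a group (its set of morphisms equipped with composition and the inverse operation), and a functor between one-object groupoids is automatically prescribed on objects and therefore reduces to a function on morphisms that preserves composition and the identity, i.e.\ a group homomorphism. Conversely, any group and any group homomorphism plainly arise in this way, and composition is preserved on the nose since functor composition restricts to composition of the underlying morphism maps.

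There is no genuine obstacle beyond Theorem~\ref{gpdfrobfunc}: the characterisation of right adjoints in $\Cat{Rel}$ has already been invoked in the paper, and the identification of one-object groupoids with groups is standard. The only point to be careful about is that the condition is an object-level property, so cutting out a \emph{full} subcategory on either side commutes with the isomorphism of Theorem~\ref{gpdfrobfunc}.
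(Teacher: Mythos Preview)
Your proof is correct and follows essentially the same route as the paper: both argue that $u \colon 1 \relto X$ has a right adjoint precisely when it is (the graph of) a function, forcing $U$ to be a singleton, so that the corresponding groupoid has a single object and is therefore a group. Your version is merely more explicit about totality/single-valuedness and about why restricting the isomorphism of Theorem~\ref{gpdfrobfunc} to a full subcategory is legitimate.
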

\begin{proof}
  The morphism $u \colon 1 \relto U$ has a right adjoint precisely
  when it is a function $u \colon 1 \to U$ and hence amounts to an
  element of $U$. That is, the corresponding groupoid has a 
  single identity, \textit{i.e.} is a group.
\end{proof}

Finally, we can consider a choice of morphisms that is natural from
the point of view of Frobenius algebras, namely the category
$\Cat{Frob}(\Cat{Rel})$. 
There is a category between $\Cat{Gpd}$ and $\Cat{Gpd}\rel$, 
that corresponds to the middle category in the sequence
$
    \Cat{Frob}(\Cat{Rel})\func
    \hookrightarrow
    \Cat{Frob}(\Cat{Rel})
    \hookrightarrow
    \Cat{Frob}(\Cat{Rel})\rel
$, 
as follows.

\begin{definition}\label{def:mfunc}
  A \emph{multi-valued functor} $\cat{G} \to \cat{H}$ between
  categories is a multi-valued function $F \colon G_1 \to H_1$ that
  preserves identities and composition: 
  \begin{align*}
    &\mbox{for } g,f \in G_1 \times_{G_0} G_1:
    && g \after f \ni h \;\Rightarrow\; F(g) \after F(f) \ni F(h), \\
    &\mbox{for } x \in G_0: && F(e(x)) \ni H_0.
  \end{align*}
  We denote the category of groupoids and multi-valued functors by
  $\Cat{Gpd}\mfunc$.  
\end{definition}

\begin{theorem}\label{grpfrobmfunc}
  There is an isomorphism of categories $\Cat{Frob}(\Cat{Rel}) \cong \Cat{Gpd}\mfunc$.
\end{theorem}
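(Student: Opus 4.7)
The plan is to reuse the object-level identification of relative Frobenius algebras with groupoids from Theorems~\ref{frobisgpd} and~\ref{gpdisfrob}, and then to match up the morphism classes. A relation $r \colon X \relto Y$ in $\Cat{Rel}$ corresponds to a multi-valued function $F \colon G_1 \to H_1$ via $F(g) = \{g' \in H_1 \mid (g, g') \in r\}$, where $\cat{G}, \cat{H}$ are the groupoids induced by $X, Y$. Under this identification relational composition is multi-valued function composition and the identity relation is the identity multi-valued function, so functoriality of the correspondence is automatic once the morphism classes agree.

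To match the morphism classes, I would first unpack the two Frobenius-morphism conditions. A direct element chase shows that $r \after m_X = m_Y \after (r \times r)$ is equivalent to the identity $F(gf) = \{g'f' \mid g' \in F(g), f' \in F(f), g'f' \defined\}$ for all composable pairs $g, f$ in $\cat{G}$, together with the constraint that no composable pair in $F(g) \times F(f)$ exists when $gf$ itself is undefined; the forward inclusion is precisely the composition-preservation clause in Definition~\ref{def:mfunc}. Condition (I), as already observed in the proof that $\Cat{Frob}(\Cat{Rel})\rel$ is a category, rewrites as $F(g^{-1}) = F(g)^{-1}$, i.e.\ preservation of inverses. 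Combining these, one derives identity-preservation $F(G_0) \subseteq H_0$: for $u \in G_0$ and $y \in F(u)$, inverse-preservation gives $y^{-1} \in F(u^{-1}) = F(u)$, so $yy^{-1}$ is defined and by the backward inclusion of multiplicativity lies in $F(u \cdot u) = F(u)$; the characterisation of $U_Y = H_0$ from Section~\ref{sec:frob} then forces $y \in H_0$.

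For the converse, given a multi-valued functor $F$, I would set $r_F = \{(g, g') \mid g' \in F(g)\}$ and verify (I) and multiplicativity by running the above equivalences backwards, noting that preservation of inverses in a groupoid follows from composition- and identity-preservation by uniqueness of inverses. The two constructions are evidently mutually inverse, yielding the isomorphism. I expect the main technical point to lie in extracting the backward inclusion of the multiplicativity equation from the one-sided composition-preservation of Definition~\ref{def:mfunc}: one has to use inverse-preservation to lift source/target compatibility in $\cat{H}$ back to source/target compatibility in $\cat{G}$, so that a defined product $g'f'$ in $\cat{H}$ forces $gf$ to be defined in $\cat{G}$. The groupoid structure is essential here, and this same argument will presumably fail in the semigroupoid setting considered in Section~\ref{sec:hstar}.
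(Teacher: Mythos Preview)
Your direct unpacking of the two conditions is a reasonable alternative to the paper's route, which instead invokes the earlier Lemma that $\Cat{Frob}(\Cat{Rel})$-morphisms satisfy (R) and then applies Theorem~\ref{gpdfrobext} to obtain a subgroupoid of $\cat{G}\times\cat{H}$; the remaining multiplication-preservation clause is then read off as the multi-valued functoriality. That detour through the subgroupoid picture does real work that your element-by-element argument has to reproduce by hand.

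The concrete gap is your derivation of identity preservation. From $u\in G_0$ and $y\in F(u)$ you correctly get $y^{-1}\in F(u)$ and $yy^{-1}\in F(u)$, but the step ``the characterisation of $U_Y=H_0$ then forces $y\in H_0$'' does not follow: knowing that the identity $yy^{-1}$ lies in $F(u)$ says nothing about $y$ itself. Indeed, take $\cat{G}=\cat{H}=\mathbb{Z}/2\mathbb{Z}$ and $r=G_1\times H_1$; one checks directly that $r\after m_X=m_Y\after(r\times r)$ and (I) hold, yet $F(0)=\{0,1\}\not\subseteq H_0=\{0\}$. So the inclusion $F(G_0)\subseteq H_0$ is not a consequence of the two Frobenius-morphism axioms, and whatever the second clause of Definition~\ref{def:mfunc} is meant to say, it cannot be that strong inclusion. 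This also undermines your converse direction: with only the one-sided clause $F(gf)\subseteq F(g)\after F(f)$ and $F(G_0)\subseteq H_0$, the map $F(n)=\{n,-n\}$ on $\mathbb{Z}$ satisfies both, yet the associated relation violates $r\after m_X=m_Y\after(r\times r)$ (take $m=n=1$). The paper sidesteps all of this because the subgroupoid description already packages the correct two-sided behaviour; if you want to proceed directly, you should take the definition of multi-valued functor to encode both inclusions of multiplicativity (equivalently: to be a subgroupoid of the product satisfying $r\after m_X=m_Y\after(r\times r)$), rather than trying to derive the second inclusion from a one-sided axiom.
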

\begin{proof}
  Let $m_X$ and $m_Y$ be relative Frobenius algebras, inducing
  groupoids $\cat{G}$ and $\cat{H}$. Let $r \colon m_X \to m_Y$ be a
  morphism in $\Cat{Frob}(\Cat{Rel})$; by Theorem~\ref{gpdfrobext} it
  induces a subgroupoid of $\cat{G} \times \cat{H}$. By the
  argument of the proof of Theorem~\ref{gpdfrobfunc}, $r$ is a
  multi-valued function $G_1 \to H_1$. But then it is precisely a
  multi-valued functor.
\end{proof}

In summary, we have the following commutative diagram of categories.
\[\xymatrix@C+2ex@R-2ex{
  \Cat{Frob}(\Cat{Rel})\func \ar^-{\cong}[d] \ar@{^{(}->}[r]
  & \Cat{Frob}(\Cat{Rel}) \ar^-{\cong}[d] \ar@{^{(}->}[r]
  & \Cat{Frob}(\Cat{Rel})\rel \ar^-{\cong}[d] \\
  \Cat{Gpd} \ar@{^{(}->}[r]
  & \Cat{Gpd}\mfunc \ar@{^{(}->}[r]
  & \Cat{Gpd}\rel
}\]

\section{Relative H*-algebras and semigroupoids}
\label{sec:hstar}

This section establishes a non-unital generalization of the
correspondence of the previous section. Frobenius algebras are relaxed
to the following non-unital version.

\begin{definition}
  A \emph{relative H*-algebra} is a morphism $m \colon X \times X
  \relto X$ in $\Cat{Rel}$ satisfying (M), (A), and
  \[
    \tag{H} \begin{array}{c} \text{ there is an involution }* \colon
      \Cat{Rel}(1,X) \to \Cat{Rel}(1,X) \text{ such that } \\
      m \after (1 \times x^*) = (1 \times x^\dag) \after m^\dag \text{  and  }
      m \after (x^* \times 1) = (x^\dag \times 1) \after m^\dag\\
      \text{ for all } x \colon 1 \relto X.\end{array}
  \]
  \[\vcenter{\hbox{\begin{tikzpicture}[scale=0.5,thick]
        \node (0) at (0,2) {};
        \node [dot] (1) at (0,1) {};
        \node (2) at (-.75,0) {};
        \node [box] (3) at (.75,0) {$x^*$};
        \node (4) at (-.75,-1) {};
        \draw (0) to (1);
        \draw [out=180, in=90] (1) to (2) -- (4);
        \draw [out=0, in=90] (1) to (3);
    \end{tikzpicture}}}
    = 
    \vcenter{\hbox{\begin{tikzpicture}[scale=0.5,thick]
        \node (0) at (0,-2) {};
        \node [dot] (1) at (0,-1) {};
        \node (2) at (-.75,0) {};
        \node [box] (3) at (.75,0) {$x$};
        \node (4) at (-.75,1) {};
        \draw (0) to (1);
        \draw [out=180, in=270] (1) to (2) -- (4);
        \draw [out=0, in=270] (1) to (3);
    \end{tikzpicture}}}
    \qquad \qquad
    \vcenter{\hbox{\begin{tikzpicture}[scale=0.5,thick]
        \node (0) at (0,2) {};
        \node [dot] (1) at (0,1) {};
        \node (2) at (.75,0) {};
        \node [box] (3) at (-.75,0) {$x^*$};
        \node (4) at (.75,-1) {};
        \draw (0) to (1);
        \draw [out=0, in=90] (1) to (2) -- (4);
        \draw [out=180, in=90] (1) to (3);
    \end{tikzpicture}}}
    = 
    \vcenter{\hbox{\begin{tikzpicture}[scale=0.5,thick]
        \node (0) at (0,-2) {};
        \node [dot] (1) at (0,-1) {};
        \node (2) at (.75,0) {};
        \node [box] (3) at (-.75,0) {$x$};
        \node (4) at (.75,1) {};
        \draw (0) to (1);
        \draw [out=0, in=270] (1) to (2) -- (4);
        \draw [out=180, in=270] (1) to (3);
    \end{tikzpicture}}}
  \]
\end{definition}

In the presence of (U) and (A), condition (H) above is equivalent to
(F)~\cite{abramskyheunen:hstar}. But in the absence of (U), it is
stronger than (F), and means concretely that
\[
   \forall A \subseteq X \, \forall x,y \in X \left[
   \begin{array}{l}
     \exists a \in A .\, xa=y \iff \exists a^* \in A^* .\, ya^*=x, \\
     \exists a \in A .\, ax=y \iff \exists a^* \in A^* .\, a^*y=x.
   \end{array}\right.
\]
Equivalently,
\[
   \forall a \in A \subseteq X \, \forall x \in X \left[
   \begin{array}{l}
     xa \defined \implies \exists a^* \in A^* .\, xaa^*=x, \\
     ax \defined \implies \exists a^* \in A^* .\, a^*ax=x. \\
   \end{array}  \right.
\]

We will prove that relative H*-algebras are precisely
semigroupoids that are regular and locally cancellative. Recall that a
semigroupoid is a `category without identities', just like a
semigroup is a `non-unital monoid'~\cite{howie:semigroups}. More
precisely, a \emph{semigroupoid} consists of a diagram 
\[\xymatrix@C+3ex{
  G_0 
  & G_1 \ar@<-.6ex>|-{s}[l] \ar@<.6ex>|-{t}[l] 
  & G_1 \times_{G_0} G_1 \ar|-{m}[l]
}\]
(in the category $\Cat{Set}$ of sets and functions) such that $m
\after (m \times 1) = m \after (1 \times m)$.
We may also assume that $s$ and $t$ are jointly epic, \textit{i.e.}\
$G_0 = s(G_1) \cup t(G_1)$.
A \emph{semifunctor} is then a `functor without preservation of
identities', \textit{i.e.}\ a pair of functions $f_i \colon G_i \to
G'_i$ that commute with the above structure.

A \emph{pseudoinverse} of $f \in G_1$ is an element $f^* \in G_1$
satisfying ($s(f)=t(f^*)$ and $t(f)=s(f^*)$ and) $f=ff^*f$ and
$f^*=f^*ff^*$. A semigroupoid is \emph{regular} when every $f \in G_1$
has a pseudoinverse. Finally, a semigroupoid is \emph{locally
cancellative} when $fhh^*=gh^*$ implies $fh=g$, and $h^*hf=h^*g$
implies $hf=g$, for any $f,g,h \in G_1$
and any pseudoinverse $h^*$ of $h$. The following lemma shows that
the asymmetry in the latter condition is only apparent.

\begin{lemma}\label{locallycancellativesymmetric}
  A semigroupoid is locally cancellative if and only if $fh^*h=gh$
  implies $fh^*=g$ for any $f,g,h \in G_1$ and any pseudoinverse $h^*$
  of $h$.  
\end{lemma}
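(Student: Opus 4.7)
The plan is to exploit the symmetry of the pseudoinverse relation. Since the defining equations $h=hh^*h$ and $h^*=h^*hh^*$ are symmetric in $h$ and $h^*$, the element $h^*$ is a pseudoinverse of $h$ precisely when $h$ is a pseudoinverse of $h^*$; hence in any assertion quantified over ``$h$ and any pseudoinverse $h^*$ of $h$'', the roles of $h$ and $h^*$ may be interchanged freely.

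For the forward direction, assume the semigroupoid is locally cancellative, so both $fhh^*=gh^*\Rightarrow fh=g$ and $h^*hf=h^*g\Rightarrow hf=g$ hold. The stated condition $fh^*h=gh\Rightarrow fh^*=g$ follows immediately from the first cancellation law by swapping the roles of $h$ and $h^*$.

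For the backward direction, the same swap instantly yields the first cancellation law from the stated condition. To derive the second law, suppose $h^*hf=h^*g$; left-multiplying by $h$ and using $hh^*h=h$ gives $hf=hh^*g$, so it suffices to show $hh^*g=g$. For this I plan to apply the stated condition with the self-pseudoinverse idempotent $hh^*$ substituted for $h$ (and also for $h^*$), combining it with the right-absorption law $x(hh^*)=x$, which itself follows from specialising the first cancellation law to the idempotent $hh^*$, to pin down $hh^*g$.

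The main obstacle is precisely this last step: the stated condition, being a form of right cancellation, directly yields only right-sided absorption of idempotents ($x(hh^*)=x$), whereas what is needed for (b) is the left-sided absorption $hh^*g=g$. Closing this asymmetry will require carefully leveraging the specific shape of the hypothesis $h^*hf=h^*g$ together with the pseudoinverse defining equations $hh^*h=h$ and $h^*hh^*=h^*$, rather than appealing to the stated condition as a blunt right-cancellation rule.
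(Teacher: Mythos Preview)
Your forward direction and the derivation of the first defining implication ($fhh^*=gh^*\Rightarrow fh=g$) from the stated condition are correct and coincide with the paper's argument: the pseudoinverse relation is symmetric, so swapping $h\leftrightarrow h^*$ interchanges the two forms.

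The gap you flag in deriving the second defining implication ($h^*hf=h^*g\Rightarrow hf=g$) from the stated condition alone is real and cannot be closed. Take the left-zero semigroup on $\{a,b\}$, where $xy=x$ for all $x,y$. Every element is a pseudoinverse of every other; the stated condition holds trivially (its hypothesis $fh^*h=gh$ reduces to $f=g$, and then $fh^*=f=g$); yet the second defining implication fails, since $h^*hf=h^*=h^*g$ always while $hf=h\neq g$ whenever $h\neq g$. In particular your target identity $hh^*g=g$ is false here ($hh^*g=h$), so no manipulation with the pseudoinverse equations will rescue that route.

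The paper's one-line proof records only the swap observation. That yields the equivalence of $fhh^*=gh^*\Rightarrow fh=g$ with $fh^*h=gh\Rightarrow fh^*=g$, and symmetrically of $h^*hf=h^*g\Rightarrow hf=g$ with $hh^*f=hg\Rightarrow h^*f=g$. Read literally, the ``if'' direction of the lemma is therefore too strong; the intended content---matching the preceding remark that the asymmetry is ``only apparent''---is that the defining pair of implications is equivalent to the swapped pair. Under that reading your proof is complete after the swap, and no separate argument for the left-cancellation clause is needed.
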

\begin{proof}
  If $h^*$ is a pseudoinverse of $h$, then $h$ is a pseudoinverse of
  $h^*$, too.    
\end{proof}

Examples of locally cancellative semigroupoids are semigroupoids in
which every morphism is both monic and epic. Clearly, groupoids are
examples of locally cancellative regular semigroupoids. The following
lemma gives a converse in the presence of identities. 

\begin{lemma}\label{locallycancellativeidentities}
  If a locally cancellative regular semigroupoids has identities, then
  it is a groupoid.
\end{lemma}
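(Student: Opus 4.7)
My plan is to show that every morphism in such a semigroupoid has a two-sided inverse. Given $f \in G_1$, regularity supplies some $f^* \in G_1$ with $s(f^*) = t(f)$ and $t(f^*) = s(f)$, satisfying $f f^* f = f$ and $f^* f f^* = f^*$. I claim this $f^*$ is already a genuine inverse, meaning $f f^* = e(t(f))$ and $f^* f = e(s(f))$; once this is established, $\cat{G}$ is a groupoid.

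To upgrade the pseudoinverse to a true inverse I would use local cancellation to ``strip off'' the extra factor of $f$ (respectively $f^*$) on the outside of the regularity identities. Using the identity law, $f f^* f = f$ rewrites as $f f^* f = e(t(f)) f$, which has the shape $a h^* h = b h$ with $h = f$, $a = f$, $b = e(t(f))$; the symmetric form of local cancellation supplied by Lemma~\ref{locallycancellativesymmetric} then yields $f f^* = e(t(f))$. Similarly, since $t(f^*) = s(f)$, the identity law rewrites the other regularity equation as $f^* f f^* = e(s(f)) f^*$, which has the shape $a h h^* = b h^*$ with $h = f$, $a = f^*$, $b = e(s(f))$; the original form of local cancellation then yields $f^* f = e(s(f))$. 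Together these make $f^*$ a categorical inverse of $f$, so $\cat{G}$ is a groupoid.

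The only real subtlety is matching the two regularity identities to the correct one-sided versions of local cancellation: one of the two equations fits the definition as stated, while the other fits the symmetric form guaranteed by Lemma~\ref{locallycancellativesymmetric}; without that lemma the argument would appear asymmetric. Beyond that choice, the proof is just bookkeeping about sources and targets, all of which is controlled by the pseudoinverse relations $s(f^*) = t(f)$ and $t(f^*) = s(f)$, so every composition written above is well-defined.
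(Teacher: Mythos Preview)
Your proof is correct and follows essentially the same approach as the paper: pick a pseudoinverse $f^*$ of $f$, rewrite the regularity identities using the identity morphisms, and apply local cancellativity (together with its symmetric form from Lemma~\ref{locallycancellativesymmetric}) to conclude $ff^*=e(t(f))$ and $f^*f=e(s(f))$. The only cosmetic difference is the choice of substitution: the paper takes $h=f^*$, $h^*=f$ and applies the defining form first, whereas you take $h=f$, $h^*=f^*$ and apply the symmetric form first; these are the same argument with the roles of $h$ and $h^*$ swapped.
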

\begin{proof}
  Let $f \in G_1$. By regularity, there is a pseudoinverse $f^*$. In
  the definition of local cancellativity, take $h=f^*$ and
  $g=1_{t(f)}$, and $h^*=f$. Then $fhh^* = ff^*f = f = 1f = gh^*$, and
  so $ff^*=1$. By Lemma~\ref{locallycancellativesymmetric} similarly
  $f^*f=1$. Thus $f^*$ is an inverse of $f$, and therefore unique. 
\end{proof}

\subsection{From relative H*-algebras to semigroupoids}

For the rest of this subsection, fix a relative H*-algebra $m \colon X
\times X \relto X$.

\begin{definition}\label{def:hstartosgpd}
  Define $\cat{G}$ by
  \begin{align*}
    G_0 & = \{ f \in X \mid m(f,f)=f \}, \\
    G_1 & = X, \\
    s & = \{ (f,f^*f) \mid f^* \emph{ is a pseudoinverse of } f \}
    \colon G_1 \relto G_0 \\
    t & = \{ (f,ff^*) \mid f^* \emph{ is a pseudoinverse of } f \}
    \colon G_1 \relto G_0. 
  \end{align*}
\end{definition}

\begin{lemma}\label{regular}
   For each element $a$ in a relative H*-algebra there exists $a^* \in \{a\}^*$
   satisfying $a^*aa^*=a^*$ and $aa^*a=a$. 
\end{lemma}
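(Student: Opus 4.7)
The plan is to build a pseudoinverse of $a$ from pseudoinverses of its (M)-factors. By (M), choose $h, g \in X$ with $a = hg$. Applying the derived form of (H) to $\{g\}$ (using that $hg$ is defined) yields $g^* \in \{g\}^*$ with $hgg^* = h$, equivalently $ag^* = h$. Dually, applying (H) to $\{h\}$ yields $h^* \in \{h\}^*$ with $h^*hg = g$, equivalently $h^*a = g$. The compositions $gg^*$, $h^*h$ are then defined, and standard (P1)/(P2) derivations show $gg^*g = g$ and $hh^*h = h$. Since the source/target analysis gives $s(g^*) = t(g) = s(h) = t(h^*)$, the composite $a^* := g^*h^*$ is defined; I claim it is the desired pseudoinverse.

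For $aa^*a = a$, associativity (A) gives $aa^*a = (hg)(g^*h^*)(hg) = h(gg^*)(h^*h)g$, and applying the right-identity property $h(gg^*) = h$ (from $hgg^* = h$) and the left-identity property $(h^*h)g = g$ reduces this to $hg = a$. For $a^*aa^* = a^*$, I first establish the key identity $gg^* = h^*h$: this common idempotent $p$ arises because (P1) for $h^*$ applied to $y = gg^*$ yields $(gg^*)(h^*h) = gg^*$, while (P2) for $g^*$ applied to $y = h^*h$ yields $(gg^*)(h^*h) = h^*h$. Then
\[
a^*aa^* = g^*(h^*h)(gg^*)h^* = g^*\, p^2\, h^* = g^*ph^*,
\]
and $ph^* = (h^*h)h^* = h^*$ (from $h^*hh^* = h^*$, derived from (H) on $\{h\}$) gives $a^*aa^* = g^*h^* = a^*$.

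The main obstacle is verifying $a^* \in \{a\}^*$, since the involution $*$ is given only axiomatically. My approach is to check the element-wise form of (H) that characterizes membership: for any $y$ with $ya^*$ defined, one must have $ya^*a = y$; and dually $aa^*y = y$ whenever $a^*y$ is defined. The first follows from
\[
ya^*a = y g^*(h^*h)g = yg^*g = y,
\]
using $(h^*h)g = g$ and (P1) for $g^*$; the second is symmetric, using (P2) for $h^*$. Since both directions of the iff in (H) for $\{a\}$ are then matched by $a^*$ (given the covering already provided by (H) as instantiated above), $a^*$ lies in $\{a\}^*$.
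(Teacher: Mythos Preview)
Your argument has a genuine gap at precisely the point you flag as ``the main obstacle'': showing that your candidate $a^* = g^*h^*$ lies in $\{a\}^*$. The involution $*$ is part of the \emph{given data} of a relative H*-algebra; it is a fixed map on $\Cat{Rel}(1,X)$ for which (H) happens to hold. Condition (H) tells you how the set $\{a\}^*$ interacts with $\{a\}$ through the multiplication, but it is not a \emph{definition} of $\{a\}^*$ and does not characterize membership. Verifying that your $a^*$ satisfies ``$ya^*$ defined $\Rightarrow ya^*a=y$'' and its dual only shows that $a^*$ could stand in for an element of $\{a\}^*$ in certain instances of (H); it does not force $a^* \in \{a\}^*$, since many elements (or sets) can satisfy the same relational equations without being the value of $*$ on $\{a\}$. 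There is no axiom linking $*$ to the multiplication --- nothing like $\{hg\}^* \supseteq \{g^*h^* : g^* \in \{g\}^*,\ h^* \in \{h\}^*\}$ --- so from $g^* \in \{g\}^*$ and $h^* \in \{h\}^*$ you simply cannot conclude $g^*h^* \in \{hg\}^*$.

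There are also secondary issues. The ``source/target analysis'' used to conclude that $g^*h^*$ is defined is circular: the semigroupoid structure is constructed \emph{after} this lemma (indeed, Lemma~\ref{semigroupoid} uses Lemma~\ref{regular}). One can in fact show $g^*h^*\defined$ purely algebraically via $gg^*=h^*h$ and associativity, but you should spell that out. More seriously, your computation of $a^*aa^*=a^*$ invokes $h^*hh^*=h^*$ ``derived from (H) on $\{h\}$'', but (H) only yields \emph{some} $b\in\{h\}^*$ with $h^*hb=h^*$, not that the specific $h^*$ you already fixed works; the same problem afflicts $g^*gg^*=g^*$.

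The paper takes the opposite route and thereby avoids the membership problem entirely: rather than building a candidate and then trying to certify that it lies in $\{a\}^*$, it repeatedly instantiates (H) with $A=\{a\}$ to \emph{extract} elements $a',a''\in\{a\}^*$ whose existence (H) guarantees, and then uses further applications of (H) to show that one of these extracted elements already satisfies both pseudoinverse equations $a^*aa^*=a^*$ and $aa^*a=a$.
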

\begin{proof}
  By (M), we have $\forall y \in X \,\exists a,x \in X .\,
  y=ax$. Applying (H) with $A=X$ gives $\forall a \in X
  \,\exists x \in X .\, xa\defined$. 
  Now let $a \in X$. If we substitute $A=\{a\}$, then (H) becomes
  \begin{align*}
    \forall x,y \in X & \big[ xa=y \iff 
    \exists a^* \in \{a\}^* .\, ya^*=x \big] \\
    \forall x,y \in X & \big[ ax=y \iff 
    \exists a^* \in \{a\}^* .\, a^*y=x \big] 
  \end{align*}
  As above, there exists $x \in X$ with $xa \defined$. So 
  by the first condition above, there is $a' \in \{a\}^*$ with $aa'
  \defined$. Hence, by the second condition, there is $a'' \in
  \{a\}^*$ with $a'' a a'=a'$. Applying the first condition again, now
  with $x=a'$ and $y=a''a$, gives $a'a=a''a$. Therefore we have
  $a^*=a^*aa^*$ for $a^*=a' \in \{a\}^*$. Finally, applying the first
  condition again, this time with $x=aa^*$ and $y=a$, we find that
  also $aa^*a=a$.
\end{proof}

\begin{lemma}\label{semigroupoid}
  The data $\cat{G}$ form a well-defined semigroupoid.
\end{lemma}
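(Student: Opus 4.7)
The plan is to verify the defining properties of a semigroupoid for $\cat G$: that $s,t$ are well-defined functions $G_1 \to G_0$, that the H*-algebra multiplication $m$ restricts to a function on the pullback $G_1 \times_{G_0} G_1$, that $m$ is associative, and that $s,t$ are jointly epic. Associativity is immediate from (A), and joint epicness follows since any $e \in G_0$ is its own pseudoinverse, giving $s(e) = t(e) = e$. Totality of $s, t$ is furnished by Lemma~\ref{regular}, and the computation $(f^*f)(f^*f) = f^*(ff^*f) = f^*f$ using (A) shows $s(f) \in G_0$; the symmetric argument handles $t(f)$.

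The main obstacle is single-valuedness of $s$ and $t$. Given two pseudoinverses $f^*$ and $f'$ of the same $f$, set $e = f^*f$ and $e' = f'f$; a routine (A)-computation yields $ee' = e$ and $e'e = e'$, but one needs more to conclude $e = e'$. My plan is to invoke the concrete form of (H) with $A = \{f\}$, which gives the biconditional $xf = y \iff \exists a^* \in \{f\}^*.\, ya^* = x$, and use the involutive nature of $*$ together with the fact (from Lemma~\ref{regular}) that the canonical pseudoinverses may be chosen inside $\{f\}^*$, to force both candidates to yield the same product $f^*f$. The same argument left--right reversed gives $ff^* = ff'$. This is the technical crux of the lemma.

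Finally, the restriction of $m$ to the pullback must be checked. If $s(g) = t(f)$, then $g^*g = ff^*$ for the canonical pseudoinverses, and $g = g \cdot g^*g = g \cdot ff^*$ is defined and equal to $(gf)f^*$ by (A), forcing $gf$ to be defined. For the converse, one observes that whenever $gf$ is defined, $(gf)(f^*f) = g(ff^*f) = gf$ exhibits $f^*f$ as a right identity for $gf$; combining this with the symmetric left-identity computation for $g$ and the single-valuedness established above then forces $s(g) = t(f)$.
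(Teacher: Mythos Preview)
Your overall decomposition matches the paper's: associativity from (A), well-definedness of $m$ on the pullback via $s(g)=t(f)\Rightarrow gf\defined$, and single-valuedness of $s,t$ as the heart of the matter. The gap is exactly at what you call the technical crux. You plan to instantiate (H) with $A=\{f\}$, obtaining $xf=y \iff \exists a^*\in\{f\}^*.\ ya^*=x$, and then to invoke involutivity and Lemma~\ref{regular}. But no substitution into that biconditional yields $f^*f=f'f$: feeding in $x=f^*$, $x=f'$, or $y=f$ only manufactures auxiliary elements of $\{f\}^*$ satisfying side relations, never the desired equality. Your preliminary observation that $ee'=e$ and $e'e=e'$ for $e=f^*f$, $e'=f'f$ is correct but, as you acknowledge, insufficient on its own; the appeal to ``the involutive nature of $*$'' does not bridge that.

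The paper instead takes $A=\{f^*\}$ --- the singleton containing one of the two pseudoinverses rather than $f$ itself --- with $x=f$ and $y=ff'$. Since $f'$ is a pseudoinverse, $ya^* = ff'f = f = x$ holds with $a^*=f$ (the paper invokes Lemma~\ref{regular} to place $f$ in $\{f^*\}^*$), and the biconditional then gives $ff^* = xa = y = ff'$ directly. Swapping which element sits in $A$ is what makes the substitution close up. One smaller remark: the converse you add for $m$ (that $gf\defined$ forces $s(g)=t(f)$) is not required for the lemma as stated and the paper omits it; your sketch of it is likewise incomplete, since exhibiting $f^*f$ as a right unit for $gf$ does not by itself yield $g^*g=ff^*$.
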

\begin{proof}
  By (A), the condition $m \after (m\times 1)=m \after (1 \times m)$ is clearly
  satisfied. It remains to prove that $m$, $s$ and $t$ are
  well-defined functions. 

  The former means that $(g,f) \in G_1 \times_{G_0} G_1$
  implies $gf \defined$. Assume $s(g)=t(f)$, \textit{i.e.}~$g^*g=ff^*$ for some
  pseudoinverses $g^*$ and $f^*$ of $g$ and $f$. Because $g^*g$ is
  idempotent, we have $g^*gff^* = g^*gg^*g = g^*g \defined$, and
  therefore also $gf \defined$. Hence $m$ is well-defined.

  As for $t$, suppose that $f^*$ and $f'$ are both pseudoinverses of
  $f$, so that $(f,ff^*) \in s$ and $(f,ff') \in s$. Then $ff^*f = f =
  ff'f$. Set $A=\{f^*\}$, $a=f^*$, $x=f$, and $y=ff'$. By
  Lemma~\ref{regular}, we obtain $f \in A^*$, and so $ya^*=x$ for $a^*=f$.
  Now it follows from (H) that $ff^*=xa=y=ff'$. 
  Similarly, $s$ is a well-defined function. 
\end{proof}

\begin{theorem}\label{locallycancellative}
  If $m$ is a relative H*-algebra, then $\cat{G}$ is a locally
  cancellative regular semigroupoid.
\end{theorem}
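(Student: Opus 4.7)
The plan starts by noting that Lemmas~\ref{semigroupoid} and~\ref{regular} already establish $\cat{G}$ as a semigroupoid in which every morphism admits a pseudoinverse, so what remains is local cancellativity: for any $f,g,h \in G_1$ and any pseudoinverse $h^*$ of $h$, I must show that $fhh^* = gh^*$ implies $fh = g$, and symmetrically that $h^*hf = h^*g$ implies $hf = g$. I will handle the first in detail; the second will go through dually.

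My strategy for the first implication is to right-multiply both sides of $fhh^* = gh^*$ by $h$ and then collapse with the pseudoinverse identity $hh^*h = h$. Before doing so I must verify that this composition is defined in the partial setting: from $gh^* \defined$ in the hypothesis I read off $s(g) = t(h^*)$, and since $h$ is itself a pseudoinverse of $h^*$ (because the two identities $hh^*h = h$ and $h^*hh^* = h^*$ are symmetric in $h$ and $h^*$), this common value is $h^*h = s(h)$. Associativity then collapses the left side $fhh^*h$ to $fh$, so the only remaining task is to show that $g(h^*h) = g$.

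The main obstacle, as I see it, is that the idempotent $h^*h$ arising on the right has a different form from $g^*g$, so one cannot directly invoke $gg^*g = g$. The crux is to appeal to the well-definedness of $s$ established in Lemma~\ref{semigroupoid}: since $s$ is a genuine function, the value $g^*g$ is independent of the chosen pseudoinverse $g^*$ of $g$ and equals $s(g)$, which we have already identified as $h^*h$. Substituting $h^*h = g^*g$ and reassociating reduces $g(h^*h)$ to $gg^*g = g$, finishing the first implication. The second implication runs dually: definedness of $h^*g$ forces $t(g) = s(h^*) = hh^*$, well-definedness of $t$ then gives $gg^* = hh^*$ for any pseudoinverse $g^*$ of $g$, and left-multiplying $h^*hf = h^*g$ by $h$ collapses both sides via $hh^*h = h$ and $gg^*g = g$ to yield $hf = g$.

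Beyond the substitution of idempotents above, I anticipate no serious algebraic difficulty, only the routine bookkeeping of matching sources and targets in each partial composite; this matching is always supplied by the pseudoinverse identities. It is worth emphasising that no further appeal to axiom~(H) itself is needed at this stage, since all of its force has already been packaged into the semigroupoid data by Lemmas~\ref{regular} and~\ref{semigroupoid}.
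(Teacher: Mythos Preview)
Your argument is correct, but it follows a genuinely different route from the paper's proof. The paper re-invokes axiom~(H) directly: given $fhh^{*}=gh^{*}$ it applies the biconditional form of~(H) with $A=\{h\}$, $x=fhh^{*}$, $y=g$, $a=h$ to obtain $fhh^{*}h=g$ in one stroke, and then simplifies the left side via $hh^{*}h=h$. You instead right-multiply by $h$ and reduce to the identity $g(h^{*}h)=g$, which you establish by recognising the idempotent $h^{*}h$ as $s(g)=g^{*}g$ through the well-definedness of $s$ proved in Lemma~\ref{semigroupoid}. Your approach has the merit of validating the remark in your final paragraph---once the semigroupoid data (in particular that $s$ and $t$ are single-valued) have been extracted from~(H), no further appeal to the axiom is needed---while the paper's argument is terser but reaches back to the raw H*-structure. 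Both routes handle an \emph{arbitrary} pseudoinverse $h^{*}$; yours does so more transparently, since the reduction to $g^{*}g$ goes through the already-established functionality of $s$ rather than through membership of $h^{*}$ in the set $\{h\}^{*}$.
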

\begin{proof}
  Regularity is precisely Lemma~\ref{regular}. Suppose that
  $fhh^*=gh^*$ for a pseudoinverse $h^*$ of $h$. Applying (H) to
  $A=\{h\}$, $x=fhh^*$, $y=g$, $a=h$ and $a^*=h^*$ yields
  $fh=fhh^*h=xa^*=y=g$. Hence $\cat{G}$ is locally cancellative.
\end{proof}

\subsection{From semigroupoids to relative H*-algebras}

For the rest of this subsection, fix a semigroupoid $\cat{G}$.

\begin{definition}
  Define
  \begin{align*}
    X & = G_1, \\
    m & = \{ (g,f,gf) \mid s(g)=t(f) \} \colon G_1 \times G_1 \relto G_1, \\
    A^* & = \{a^* \in X \mid a^*aa^*=a^* \mbox{ and } aa^*a=a \mbox{ for all } a \in A \}.
  \end{align*}
\end{definition}

\begin{theorem}
  If $\cat{G}$ is a locally cancellative regular semigroupoid, then
  $m$ is a relative H*-algebra.  
\end{theorem}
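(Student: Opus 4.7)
The plan is to verify axioms (M), (A), and (H) for the relation $m$. Associativity (A) transfers directly from associativity of composition in $\cat{G}$. For (M), since $m$ is the graph of the partial composition function, one automatically has $m \after m^\dag \subseteq 1$; and regularity supplies every $f \in G_1$ with a pseudoinverse $f^*$ giving the decomposition $f = (ff^*)f$, so that $(f,f) \in m \after m^\dag$. Hence both (A) and (M) are immediate.

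The real content is (H). As a key preliminary, I would first show that pseudoinverses in a locally cancellative regular semigroupoid are unique. Given $c_1, c_2$ both pseudoinverses of $a$, apply the rule ``$fhh^* = gh^* \Rightarrow fh = g$'' to the pair $(h, h^*) = (c_1, a)$ (which is a pseudoinverse pair by the symmetry of the pseudoinverse conditions) with $f = a$ and $g = ac_2$: both $ac_1 a$ and $ac_2 a$ equal $a$, forcing $ac_1 = ac_2$ and hence $c_1 = c_1 a c_1 = c_1 a c_2$. A second application, this time of the swapped form ``$hh^* f = hg \Rightarrow h^* f = g$'' (the second clause of local cancellativity with $h$ and $h^*$ interchanged), to the pair $(h, h^*) = (a, c_1)$ with $f = ac_2$, $g = c_2$, yields $c_1 a c_2 = c_2$. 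Combining the two equalities gives $c_1 = c_2$.

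With uniqueness in hand, define $*$ on $\Cat{Rel}(1,X) = \mathcal{P}(X)$ by $A^* = \{a^* : a^* \text{ is the pseudoinverse of some } a \in A\}$; the symmetry of the pseudoinverse relation together with uniqueness makes this an involution, so $A^{**} = A$. The two H*-equations translate concretely, as spelled out in the paper, into $\exists a \in A: xa = y \iff \exists a^* \in A^*: y a^* = x$ together with its left-sided analogue. For the forward direction of the first, given $xa = y$ and the pseudoinverse $a^*$ of $a$, the identity $aa^* a = a$ yields $y a^* a = xa a^* a = xa = y$; Lemma~\ref{locallycancellativesymmetric} applied with $f = y$, $h^* = a^*$, $h = a$, $g = x$ then delivers $y a^* = x$. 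Conversely, from $y a^* = x$ and $a^* a a^* = a^*$ one gets $xa a^* = y a^* a a^* = y a^* = x$, so ``$fhh^* = gh^* \Rightarrow fh = g$'' (with $f = x$, $h = a$, $h^* = a^*$, $g = y$) yields $xa = y$. The second H*-equation is handled by the symmetric cancellation clauses acting on the left.

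The chief obstacle is the uniqueness-of-pseudoinverses lemma, which requires a coordinated use of both clauses of local cancellativity, each specialised to a pseudoinverse pair produced from $a$ and one of its candidate inverses; once uniqueness is established the involution $*$ is essentially forced, and the H*-biconditionals reduce to single applications of local cancellativity in concert with the defining pseudoinverse identities $a a^* a = a$ and $a^* a a^* = a^*$.
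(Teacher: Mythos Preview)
Your argument is correct and parallels the paper's: (A) is immediate, (M) follows from regularity via the decomposition $f=(ff^*)f$, and (H) is obtained by applying local cancellativity to pseudoinverse pairs. The substantive addition is your uniqueness-of-pseudoinverses lemma. The paper simply sets $A^*$ to be the collection of pseudoinverses of members of $A$ and verifies only the one-directional implication $xa\defined \Rightarrow \exists a^*\in A^*\colon xaa^*=x$, never explicitly checking that $A\mapsto A^*$ is an involution. Your uniqueness lemma makes that involution property transparent (the assignment $a\mapsto a^*$ becomes a bijection with $(a^*)^*=a$, hence $A^{**}=A$), and lets you establish both directions of the (H) biconditional directly. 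This is a genuine clarification rather than a different route: without uniqueness one only has $A\subseteq A^{**}$ in general, so the paper's terse verification tacitly relies on the very fact you isolate and prove.
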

\begin{proof}
  Clearly, (A) is satisfied. Because
  \[
    m^\dag \after m 
   = \{ (f,f) \in G_1^2 \mid \exists (g,h) \in G_2 \,.\, f=hg \}
  \]
  we have $m^\dag \after m \subseteq 1$. Conversely, if $f \in G_1$,
  setting $g=f$ and $h=f^*f$ for some pseudoinverse $f^*$ of $f$, then
  $f=gh$. Hence (M) is satisfied. 

  Finally, we verify (H). Let $A \subseteq X$ be given, let $a \in
  A$ and $x \in X$, and suppose that $xa\defined$. That means that
  $s(f)=t(a)$. By regularity, $a$ has a pseudoinverse $a^* \in A^*$,
  and we have $xa=xaa^*a$. Setting $f=xa$, $g=x$, $h=a$ and $h^*=a^*$
  in the definition of local cancellativity yields $xaa^*=x$. The
  symmetric condition is verified similarly. Hence (H) is satisfied. 
\end{proof}

\subsection{Functoriality}

This subsection proves that the assignments $m \mapsto \cat{G}$ and
$\cat{G} \mapsto m$ extend functorially to an adjunction. The
following definitions give well-defined categories, just 
as in Subsection~\ref{subsec:frob:functoriality}. Condition (I) has to
be adapted to the non-unital setting of H*-algebras, and becomes the
following.
\begin{align}
  y \after r \after x = y^* \after r \after x^*
  \text{  for all }x \colon 1 \relto X, y \colon 1 \relto Y
  \tag{I'}
\end{align}
Concretely, this means that $(x,y) \in r$ if and only if $(x^*,y^*) \in
r$ for any pseudoinverses $x^*$ of $x$ and $y^*$ of $y$.

\begin{definition}
  Relative H*-algebras can be made into the objects of several categories.
  A morphism $(X,m_X) \to (Y,m_Y)$ in $\Cat{Hstar}(\Cat{Rel})\rel$ is
  a morphism $r \colon X \relto Y$ in $\Cat{Rel}$ satisfying (R) and (I').
  A morphism in $\Cat{Hstar}(\Cat{Rel})$ is a morphism $r \colon X
  \relto Y$ in $\Cat{Rel}$ that satisfies (I') and preserves multiplication: $r \after
  m_X = m_Y \after (r \times r)$. A morphism in
  $\Cat{Hstar}(\Cat{Rel})\func$ is a morphism in
  $\Cat{Hstar}(\Cat{Rel})$ that additionally has a right adjoint.
\end{definition}

\begin{definition}
  Locally cancellative regular semigroupoids can be made into the
  objects of several categories.
  Morphisms in $\Cat{LRSgpd}$ are semifunctors.
  Morphisms in $\cat{G} \to \cat{H}$ in $\Cat{LRSgpd}\mfunc$ are
  multi-valued semifunctors, \textit{i.e.}\ multi-valued functions
  $G_i \to H_i$ satisfying only the first condition of
  Definition~\ref{def:mfunc}. 
  Morphisms in $\Cat{LRSgpd}\rel$ are
  locally cancellative regular subsemigroupoids of $\cat{G} \times
  \cat{H}$.
\end{definition}

\begin{proposition}
  The assignments $m \mapsto \cat{G}$ and $\cat{G} \mapsto m$ extend to functors
  \begin{align*}
    \Cat{Hstar}(\Cat{Rel})\rel & \leftrightarrows \Cat{LRSgpd}\rel, \\
    \Cat{Hstar}(\Cat{Rel}) & \leftrightarrows \Cat{LRSgpd}\mfunc, \\
    \Cat{Hstar}(\Cat{Rel})\func & \leftrightarrows \Cat{LRSgpd}. 
  \end{align*}
\end{proposition}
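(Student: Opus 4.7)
The plan is to mimic Subsection~\ref{subsec:frob:functoriality} by defining each functor on morphisms while reusing the object-level assignments of the previous two subsections. For the top row, given a morphism $r \colon (X,m_X) \relto (Y,m_Y)$ in $\Cat{Hstar}(\Cat{Rel})\rel$, I would first observe that condition~(R) lets us equip the subset $r \subseteq X \times Y$ with the restriction $m_r = (m_X \times m_Y) \after (1 \times \swapmap \times 1)$, as in the proof of Theorem~\ref{gpdfrobext}. The key technical step is to check that $m_r$ satisfies (M), (A), and (H): associativity is inherited componentwise, (M) follows from (M) on each factor, and (H) is where (I') does the work, since pseudoinverses in $r$ are exactly pairs of pseudoinverses by (I'). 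Then Theorem~\ref{locallycancellative} yields a locally cancellative regular semigroupoid $\cat{R}$, and the structure maps are restrictions of those on $\cat{G} \times \cat{H}$, because $G_0$ is intrinsically described by idempotency $m(f,f)=f$ in Definition~\ref{def:hstartosgpd}. In the reverse direction, a locally cancellative regular subsemigroupoid $\cat{R} \subseteq \cat{G} \times \cat{H}$ gives $R_1 \subseteq G_1 \times H_1 = X \times Y$; closure under composition is (R), and closure under pseudoinverses (which exist by regularity of $\cat{R}$) is (I').

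For the middle row, the preservation condition $r \after m_X = m_Y \after (r \times r)$ translates directly into: $(x,y),(x',y') \in r$ and $xx' \defined$ imply $(xx',yy') \in r$. That is exactly the first axiom of Definition~\ref{def:mfunc}, so $r$ is a multi-valued semifunctor. Conversely, a multi-valued semifunctor gives a relation preserving multiplication, and (I') follows from semifunctoriality applied to pseudoinverses together with the uniqueness argument of Lemma~\ref{semigroupoid} that turned multiple choices of pseudoinverse into the same image. For the bottom row, the additional hypothesis that $r$ has a right adjoint collapses the multi-valued semifunctor to an ordinary semifunctor, exactly as in the proof of Theorem~\ref{gpdfrobfunc}, since admitting a right adjoint in $\Cat{Rel}$ is equivalent to being the graph of a function.

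Having defined each assignment on morphisms, functoriality is routine: identities go to identities (the diagonal subsemigroupoid, the identity multi-valued semifunctor, and the identity semifunctor respectively), and composition is preserved because relational composition of subsets of products corresponds to composition of subsemigroupoids, and because composition of multi-valued functions is relational composition of their graphs. The argument that (R) and (I') are closed under composition is identical in spirit to the one in Proposition~3.10 of Subsection~\ref{subsec:frob:functoriality}, where the use of units is now replaced by pseudoinverses supplied by Lemma~\ref{regular}.

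The main obstacle I expect is the verification that the induced subobject is \emph{locally cancellative} regular, rather than merely regular. Regularity is immediate from (I'), since pseudoinverses inside $r$ are pairs of pseudoinverses. Local cancellativity, however, is a universally quantified implication, and one has to check it in the subsemigroupoid rather than inherit it componentwise; I would argue that if $(f,f'), (g,g'), (h,h'), (h^*,h'^*) \in r$ satisfy $fhh^*=gh^*$ (and similarly in the second coordinate), then local cancellativity in $\cat{G}$ and $\cat{H}$ give $fh=g$ and $f'h'=g'$ separately, and closure of $r$ under multiplication provides $(fh,f'h')=(g,g') \in r$, so the conclusion holds inside $\cat{R}$ as well. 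This same pattern — properties transferring because $r$ is closed under the relevant operations — is what makes the whole proposition go through uniformly at all three levels.
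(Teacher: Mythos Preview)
Your proposal is correct and follows essentially the same route as the paper: verify that $m_r$ is itself a relative H*-algebra (with (I') supplying the componentwise star operation $A^* = \{(x^*,y^*) \mid (x,y)\in A\}$) and then invoke Theorem~\ref{locallycancellative} to obtain a locally cancellative regular subsemigroupoid, with the converse and the restrictions to the smaller categories handled exactly as you describe. Your closing ``main obstacle'' paragraph is unnecessary---once (H) is established for $m_r$, local cancellativity comes for free from Theorem~\ref{locallycancellative}, so no separate componentwise verification is required.
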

\begin{proof}
  We prove the case $\Cat{Hstar}(\Cat{Rel})\rel 
  \leftrightarrows \Cat{LRSgpd}\rel$.
  Let $(X,m_X)$ and $(Y,m_Y)$ be relative H*-algebras, inducing
  locally cancellative regular semigroupoids $\cat{G}$ and
  $\cat{H}$. Given $r \colon m_X \to m_Y$, define $m_r \colon r \times
  r \relto r$ as in the proof of Theorem~\ref{gpdfrobext}; it
  satisfies (A) and (M). It also satisfies (H), as we now verify. For
  $A \subseteq r$, take
  $
    A^* = \{ (x^*,y^*) \mid (x,y) \in A, x^* \in \{x\}^*, y^* \in
    \{y\}^* \}
  $.
  \begin{align*}
    & (1 \times A) \after m_r^\dag \\
    & \: = \{((x,y),(a,b)) \in r \times r \mid \exists (c,d) \in A \,.\, y=bd, x=ac\} \\
    & \stackrel{\mbox{\tiny{(H)}}}{=} \{ ((x,y),(a,b)) \in r \times r \mid \exists
    (c,d) \in A, c^* \in \{c\}^*, d^* \in \{d\}^* \,.\, a=xc^*, b=yd^*
    \} \\
    & \: = m_r \after (1 \times A^*).
  \end{align*}
  Theorem~\ref{locallycancellative} now shows that $m_r$ induces a
  subsemigroupoid of $\cat{G} \times \cat{H}$.
  Conversely, if $\cat{R}$ is a subsemigroupoid of $\cat{G} \times
  \cat{H}$, then $R_1 \colon G_1 \relto H_1$ clearly satisfies
  (R) and (I'). Finally, the identity relation $r \colon m_X \relto m_Y$
  corresponds to the diagonal subsemigroupoid, which is indeed regular
  and locally cancellative. 
  These constructions clearly restrict to the subcategories of the statement.
\end{proof}

\begin{theorem}\label{hstaradjunctions}
  The functors from the previous proposition form adjunctions.
  \[\xymatrix@C+3ex@R-5ex{
    \Cat{LRSgpd}\rel \ar@<1ex>[r] \ar@{}|-{\perp}[r]
    & \Cat{Hstar}(\Cat{Rel})\rel \ar@<1ex>[l] \\
    \Cat{LRSgpd}\mfunc \ar@<1ex>[r] \ar@{}|-{\perp}[r]
    & \Cat{Hstar}(\Cat{Rel}) \ar@<1ex>[l] \\
    \Cat{LRSgpd} \ar@<1ex>[r] \ar@{}|-{\perp}[r] 
    & \Cat{Hstar}(\Cat{Rel})\func \ar@<1ex>[l]
  }\]
\end{theorem}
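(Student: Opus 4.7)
The plan is to identify the unit and counit of each adjunction, and then verify naturality and the triangle identities. Let $L$ denote the functor from H*-algebras to semigroupoids of Definition~\ref{def:hstartosgpd}, and $R$ the functor going the other way. The key observation is that $R$ discards only the object set $G_0$ of $\cat{G}$, while $L$ canonically reconstructs it as the set of idempotents of $(G_1, m)$; therefore $R \after L = \mathrm{id}$ on each of $\Cat{Hstar}(\Cat{Rel})\rel$, $\Cat{Hstar}(\Cat{Rel})$, and $\Cat{Hstar}(\Cat{Rel})\func$. Hence the unit $\eta$ of the adjunction $L \dashv R$ can be taken to be the identity natural transformation.

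For the counit $\epsilon \colon LR \to \mathrm{id}$ on a semigroupoid $\cat{G}$, observe that $LR(\cat{G})$ has the same morphism set $G_1$ but replaces $G_0$ by the (in general larger) set of all idempotents of $G_1$. Define $\epsilon_{\cat{G}}$ to be the identity on morphisms and to send each idempotent $e \in G_1$ to $s(e) = t(e) \in G_0$; these values agree because $ee \defined$ forces $s(e) = t(e)$. This is a well-defined semifunctor: for $f \in G_1$, the source $s_{LR(\cat{G})}(f) = f^* f$ is sent to $s(f^* f) = s(f)$, as required. For the $\mfunc$ and $\rel$ variants the same data presents as a (trivially single-valued) multi-valued semifunctor, respectively as its graph viewed as a diagonal subsemigroupoid of $LR(\cat{G}) \times \cat{G}$, which is locally cancellative regular because $LR(\cat{G})$ itself is. Naturality of $\epsilon$ in $\cat{G}$ reduces to the compatibility $s \after \phi_1 = \phi_0 \after s$ that every (multi-valued, relational) semifunctor $\phi$ already satisfies.

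The triangle identities are then essentially automatic given that $\eta$ is the identity: $R\epsilon \after \eta_R = R\epsilon$, and $R\epsilon_{\cat{G}}$ is the identity on $G_1$, hence the identity H*-algebra morphism; similarly $\epsilon_L \after L\eta = \epsilon_L$, and $\epsilon_{L(X,m)}$ sends each idempotent $e \in X$ to $s(e) = e^* e = e$ (as such an $e$ is its own pseudoinverse), so it is again the identity.

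The main obstacle is the bookkeeping across the three variants. Under the bijection $\mathrm{Hom}(Lm, \cat{G}) \cong \mathrm{Hom}(m, R\cat{G})$ produced by $\eta$ and $\epsilon$, one must verify that the morphism class specified on each side corresponds exactly to that on the other. The most delicate case is $\Cat{Hstar}(\Cat{Rel})\rel \leftrightarrows \Cat{LRSgpd}\rel$, where passing from a subsemigroupoid $\cat{R} \subseteq \cat{G} \times \cat{H}$ to its morphism component $R_1 \colon G_1 \relto H_1$ (and back) requires carefully relativising Theorem~\ref{gpdfrobext}, though the arguments given in the preceding proposition should carry over with only minor modification.
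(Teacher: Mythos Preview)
Your central claim that $R \after L = \mathrm{id}$ is false, and the whole argument rests on it. Starting from a relative H*-algebra $(X,m)$, applying $L$ produces the semigroupoid with $G_1 = X$, $G_0$ the idempotents, $s(f)=f^*f$, $t(f)=ff^*$; applying $R$ then returns the H*-algebra on $X$ with multiplication
\[
  m' \;=\; \{\,(g,f,gf) \mid \exists\, g^* \in \{g\}^*,\; f^* \in \{f\}^* \text{ with } g^*g = ff^*\,\}.
\]
This is only a \emph{subrelation} of $m$: in a general relative H*-algebra, $gf\defined$ does not force $g^*g = ff^*$. The paper's proof uses exactly this inclusion $m' \subseteq m$ (viewed via the identity relation on $X$) as the nontrivial component of the adjunction on the H*-algebra side, and the very next Proposition characterises when it is an isomorphism: precisely when $(X,m)$ already satisfies (U), i.e.\ is a relative Frobenius algebra. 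So your ``key observation'' fails outside the Frobenius case, and the triangle identities you derive from $\eta = \mathrm{id}$ do not go through.

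The source of the error is the sentence ``$R$ discards only the object set $G_0$''. It does not: the multiplication relation $R(\cat{G}) = \{(g,f,gf) \mid s(g)=t(f)\}$ encodes the composability predicate coming from $s,t$, and when you rebuild $s,t$ via $L$ you get a \emph{different} composability predicate in general. To repair the argument you must take the unit (respectively counit, depending on which adjoint you place on the left) to be the identity-on-$X$ morphism between the two distinct H*-structures $(X,m)$ and $(X,m')$, check that this really is a morphism in each of the three categories, and then verify the triangle equations with a genuinely nontrivial $\eta$. Your description of the semigroupoid-side component $\epsilon_{\cat{G}}$ is essentially correct, but it does not compose with an identity to give identities; both legs contribute.
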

\begin{proof}
  Starting with a relative
  H*-algebra $m \colon X \times X \relto X$, we end up with 
  \[
  \{(g,f,gf) \mid \exists g^* \in \{g\}^* \exists f^* \in \{f\}^*
  \,.\, g^*g=ff^* \} \colon X \times X
  \relto X.
  \]
  Clearly this is a subrelation of $m$, and the inclusion forms the
  unit of the adjunction.

  Starting with a locally cancellative regular semigroupoid $\cat{G}$,
  we end up with
  \[\xymatrix@C+3ex{
    \{ f \in G_1 \mid f^2=f \}
    & G_1 \ar@<-.6ex>|-{s'}[l] \ar@<.6ex>|-{t'}[l] 
    & G_1 \times_{s',t'} G_1 \ar|-{m}[l]
  }\]
  where $s'(f)=f^*f$ and $t'(f)=ff^*$. Clearly, the original $\cat{G}$
  maps into this, giving the counit of the adjunction.
  Naturality and the triangle equations are easily checked for
  $\Cat{LRSgpd}\rel \leftrightarrows
  \Cat{Hstar}(\Cat{Rel})\rel$. Because the unit and counit are
  functions, the statement also holds for the other subcategories.
\end{proof}

\begin{proposition}
  The largest subcategories making the 
  adjunctions of Theorem~\ref{hstaradjunctions} into equivalences are
  $\Cat{Gpd}\rel$ and $\Cat{Frob}(\Cat{Rel})\rel$, and their variations. In that
  case, the equivalences are in fact isomorphisms.
\end{proposition}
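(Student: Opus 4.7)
The plan is to identify, for each adjunction, the \emph{fixed} objects where unit and counit are isomorphisms; these form the largest full subcategories on which the adjunction restricts to an equivalence. I claim these fixed subcategories are precisely $\Cat{Frob}(\Cat{Rel})\rel$ and $\Cat{Gpd}\rel$ (and their variants), and that on these the comparison maps are literal identities, making the equivalences into strict isomorphisms of categories, in agreement with Theorem~\ref{gpdfrobext}. I focus on the top adjunction $\Cat{LRSgpd}\rel \leftrightarrows \Cat{Hstar}(\Cat{Rel})\rel$; the other two cases follow by restriction along the subcategory inclusions.

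On the H*-algebra side, Theorem~\ref{hstaradjunctions} shows that the round trip sends $m$ to the subrelation
\[
  m' = \{(g,f,gf) \mid \exists g^* \in \{g\}^*,\, f^* \in \{f\}^* \,.\, g^*g = ff^*\} \subseteq m,
\]
so the comparison is an isomorphism exactly when $m = m'$. If $m$ satisfies (U), then by Theorem~\ref{frobisgpd} the induced semigroupoid is a groupoid, pseudoinverses are two-sided inverses, and for any defined $gf$ one automatically has $g^*g = s(g) = t(f) = ff^*$; hence $m = m'$. For the converse, assuming $m = m'$, I verify (U) with $U = \{u \in X \mid u^2 = u\}$. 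Given $f \in X$, pick any pseudoinverse $f^*$ (which exists by Lemma~\ref{regular}); then $f^*f \in U$ and $f \cdot (f^*f) = (ff^*)f = f$ by associativity and regularity, and symmetrically $(ff^*) \cdot f = f$. For the uniqueness clause of (U), if $u \in U$ and $fu$ is defined, then $m = m'$ supplies pseudoinverses with $f^*f = uu^*$; taking $u^* = u$ (valid since $u^2 = u$) gives $f^*f = u$, whence $fu = ff^*f = f$. The clause on $uf$ is analogous. By Theorem~\ref{gpdisfrob} this makes $m$ a relative Frobenius algebra.

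On the semigroupoid side, the round trip produces $\cat{G}'$ with $G_0' = \{f \in G_1 \mid f^2 = f\}$ and $s'(f) = f^*f$, $t'(f) = ff^*$, so the comparison is an isomorphism exactly when the objects of $\cat{G}$ biject with the idempotents of $G_1$ via identities. If $\cat{G}$ is a groupoid, every identity is idempotent, and conversely any idempotent $e$ satisfies $e = e^{-1}(e \cdot e) = e^{-1} e = 1_{s(e)}$; so $G_0 \cong G_0'$ via $x \mapsto 1_x$. Conversely, if the comparison is an isomorphism then each object of $\cat{G}$ acquires a distinguished idempotent endomorphism playing the role of $1_x$; regularity and associativity force it to act as an identity, so by Lemma~\ref{locallycancellativeidentities} $\cat{G}$ is a groupoid.

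Combining the two analyses, the fixed subcategories are precisely $\Cat{Frob}(\Cat{Rel})\rel$ and $\Cat{Gpd}\rel$; on these, both unit and counit are literal identity maps, so the equivalence is a strict isomorphism of categories, recovering Theorem~\ref{gpdfrobext}. The same argument, restricted along the inclusions, yields the isomorphisms $\Cat{Frob}(\Cat{Rel}) \cong \Cat{Gpd}\mfunc$ and $\Cat{Frob}(\Cat{Rel})\func \cong \Cat{Gpd}$ of Theorems~\ref{grpfrobmfunc} and~\ref{gpdfrobfunc}. The trickiest step will be the converse on the H*-algebra side, where the bare relational equality $m = m'$ must be turned into a genuine subset of units; the argument above accomplishes this by exhibiting $U$ as the set of idempotents and exploiting the pseudoinverse identity $ff^*f = f$ together with associativity.
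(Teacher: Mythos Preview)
Your proof is correct and follows essentially the same route as the paper: analyze when the unit and counit of Theorem~\ref{hstaradjunctions} are isomorphisms, show that the former forces (U) (hence Frobenius) and the latter forces identities (hence a groupoid via Lemma~\ref{locallycancellativeidentities}), and conclude that the fixed subcategories are exactly $\Cat{Frob}(\Cat{Rel})\rel$ and $\Cat{Gpd}\rel$. The only cosmetic differences are your choice of $U$ as the idempotents rather than the paper's $\{u \in X \mid u = u^*u\}$---these sets coincide---and your phrase ``taking $u^* = u$,'' which tacitly relies on the well-definedness of $t$ from Lemma~\ref{semigroupoid} (so that $uu^*$ is independent of the chosen pseudoinverse).
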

\begin{proof}
  Consider the counit of the proof of Theorem~\ref{hstaradjunctions}.
  It is an isomorphism precisely when $G_0$ coincides with the
  idempotents of $G_1$. But then the unique idempotent on $x \in G_0$
  is an identity, and $\cat{G}$ is a groupoid by
  Lemma~\ref{locallycancellativeidentities}. In other
  words, $\Cat{Gpd}$ and its variations are the largest subcategories
  of $\Cat{LRSgpd}$ and its variations turning the adjunctions
  into reflections.

  Next consider the unit of the adjunctions. It is an isomorphism
  when $gf \defined$ implies $g^*g=ff^*$ for some
  pseudoinverses $f^* \in \{f\}^*$ and $g^* \in \{g\}^*$. In that case we
  can define a unit for the H*-algebra $(X,m)$ by $\{u \in X \mid u=u^*u\}$, for if
  $uf \defined$ and $u=u^*u$ then $uf=u^*uf=ff^*f=f$. But recall that unital
  relative H*-algebras are relative Frobenius algebras.
  In other words, $\Cat{Frob}(\Cat{Rel})$ and its variations
  are the largest subcategories of $\Cat{Hstar}(\Cat{Rel})$ and its
  variations turning the above adjunctions into reflections.
\end{proof}

\section{Groupoids and semigroupoids}
\label{sec:quotient}

The forgetful functor $\Cat{Gpd} \to \Cat{Cat}$ has a left
adjoint, that freely adds inverses. Similarly, the forgetful functor
$\Cat{Cat} \to \Cat{Sgpd}$ to the category of semigroupoids and
semifunctors has a left adjoint, that freely adds identities. The
image of the latter left adjoint consists precisely of those
categories in which the only isomorphisms are identities. 
Hence there is a functor $\Cat{Sgpd} \to \Cat{Gpd}$
giving the free groupoid on a semigroupoid. Restricting it gives a
functor that turns a locally cancellative regular semigroupoid into a
groupoid. 
\[\xymatrix@C+3ex{
  \Cat{LRSgpd} \ar@<1ex>[r] \ar@{}|-{\perp}[r]
  & \Cat{Gpd} \ar@{_{(}->}@<1ex>[l]
}\]
The morphisms in these categories are (semi)functors. This section
establishes right adjoints to the inclusion $\Cat{Gpd}\rel 
\hookrightarrow \Cat{LRSgpd}\rel$ and its variations with other
choices of morphisms. This is then applied to obtain 
adjunctions between $\Cat{HStar}(\Cat{Rel})\rel$ and
$\Cat{Frob}(\Cat{Rel})\rel$ (and their variations). 
The idea in building the right adjoint is to identify all
idempotents: a group is a regular semigroup with a single
idempotent. 

\begin{definition}\label{def:sgpdtogpd}
  For a semigroupoid $\cat{G}$, define $\sim$ as the
  congruence (see~\cite[Section~II.8]{maclane:categories}) 
  generated by $f \sim g$ when $s(f)=s(g)$ and $f^2=f$ and
  $g^2=g$. Set
  \begin{align*}
    G'_0 & = G_0, 
    & s'([f]) & = s(f), 
    & m'([g],[f]) & = [m(g,f)], \\
    G'_1 & = G_1 \mathop{\slash} \mathop{\sim}, 
    & t'([f]) & = t(f).
  \end{align*}
\end{definition}

\begin{lemma}
  If $\cat{G}$ is a (locally cancellative regular) semigroupoid, then 
  \[\cat{G'} = \left(\xymatrix@C+3ex{
    G'_0 
    & G'_1 \ar@<-.6ex>|-{s'}[l] \ar@<.6ex>|-{t'}[l] 
    & G'_1 \times_{G'_0} G'_1 \ar|-{m'}[l]
  }\right)\]
  is again a well-defined (locally cancellative regular) semigroupoid.
\end{lemma}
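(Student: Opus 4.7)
The plan is: verify $\sim$ is a genuine congruence (giving a well-defined semigroupoid), check that regularity descends, extract identities at every object of $\cat{G}'$, and then deduce local cancellativity. The last step is where the real work lies.

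First I would show that $\sim$ preserves source and target on each class, so that $s'$ and $t'$ are well-defined functions. The generators pair elements with equal source, and the condition $f^2=f$ forces $s(f)=t(f)$, so generators also agree on targets. Preservation of source and target propagates through reflexive, symmetric, transitive, and composition-compatible closure, which is exactly what makes $m'$ descend to the quotient. Associativity of $m'$ is then inherited from $m$, and joint epicness of $s',t'$ from that of $s,t$ because $G_0' = G_0$.

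Second, regularity lifts cleanly: given $[f] \in G_1'$, pick any pseudoinverse $f^* \in G_1$ of a chosen lift $f$; then $[f][f^*][f] = [ff^*f] = [f]$ and $[f^*][f][f^*] = [f^*]$, so $[f^*]$ is a pseudoinverse of $[f]$. A useful byproduct is that $\cat{G}'$ carries an identity $\iota_x$ at every vertex $x \in G_0$: all idempotents of $\cat{G}$ at $x$ lie in one equivalence class by the defining generators, and for any $[f]$ with $s(f) = x$,
\[
  [f]\,\iota_x \;=\; [f]\,[f^*f] \;=\; [ff^*f] \;=\; [f],
\]
using $ff^*f=f$ for any pseudoinverse $f^*$ of $f$; the mirror equation $\iota_{t(f)}\,[f]=[f]$ holds by the same trick with $ff^*$.

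The main obstacle is local cancellativity, because a pseudoinverse $[h^*]$ of $[h]$ in $\cat{G}'$ need not be the class of any pseudoinverse of $h$ in $\cat{G}$, so one cannot simply lift the cancellativity of $\cat{G}$. Suppose $[f][h][h^*] = [g][h^*]$ with $[h^*]$ a pseudoinverse of $[h]$ in $\cat{G}'$. Both sides share source $s(h^*) = t(h)$, so right-multiplying by $[h]$ is legal; associativity and $[h][h^*][h] = [h]$ simplify the left-hand side to $[f][h]$, giving
\[
  [f]\,[h] \;=\; [g]\,[h^*h].
\]
From the hypothesis that $[g][h^*]$ is defined we read $s(g) = t(h^*) = s(h)$, so $h^*h$ is an idempotent at $s(g)$ and $[h^*h] = \iota_{s(g)}$; the previous paragraph then gives $[g]\,\iota_{s(g)} = [g]$, hence $[f][h] = [g]$. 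The symmetric cancellation statement $[h^*][h][f] = [h^*][g] \Rightarrow [h][f] = [g]$ follows by the mirror calculation, left-multiplying by $[h]$ and using $[hh^*] = \iota_{t(g)}$ together with $\iota_{t(g)}\,[g] = [g]$.
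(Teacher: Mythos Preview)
Your overall strategy is sound and in fact more explicit than the paper's own argument, which simply asserts that local cancellativity ``is easy to see\ldots\ from $\cat{G}$ using that $\sim$ is a congruence.'' You are right to flag that this inheritance is not automatic, precisely because a $\cat{G}'$-pseudoinverse $[h^*]$ need not arise from any $\cat{G}$-pseudoinverse of $h$.

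There is, however, a genuine gap in your third paragraph. You write that ``$h^*h$ is an idempotent at $s(g)$ and $[h^*h]=\iota_{s(g)}$,'' but this does not follow from what you have established. The element $\iota_x$ is, by your own construction, the common class of the $\cat{G}$-idempotents at $x$, and you correctly show it acts as a two-sided identity. Now $[h^*h]$ is indeed idempotent \emph{in $\cat{G}'$} (since $[h][h^*][h]=[h]$), but the chosen representative $h^*h$ need not be idempotent in $\cat{G}$, so the defining generators of $\sim$ do not place it in $\iota_{s(h)}$. What you really need is the stronger fact that every idempotent of $\cat{G}'$ at $x$ equals $\iota_x$.

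This missing step can be supplied cheaply. Let $[e]$ be idempotent in $\cat{G}'$ at $x$, so $e^2\sim e$ and $s(e)=t(e)=x$. Pick a $\cat{G}$-pseudoinverse $e^*$ of $e$; then $e^*e$ is a $\cat{G}$-idempotent at $x$, whence $[e^*e]=\iota_x$. Since $\sim$ is a congruence, $e^2\sim e$ yields $e^*e^2\sim e^*e$. But $e^*e^2=(e^*e)e$, so $[e^*e^2]=\iota_x[e]=[e]$ by the identity property you already proved. Hence $[e]=[e^*e]=\iota_x$. With this lemma in hand your cancellation argument goes through verbatim, since now $[h^*h]=\iota_{s(g)}$ is justified.

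Once patched, your route genuinely differs from the paper's: you never invoke local cancellativity of $\cat{G}$, only regularity, and you effectively anticipate the next lemma by producing identities in $\cat{G}'$ and showing idempotents collapse to them. The paper instead gestures at lifting the cancellation hypothesis along the congruence; your argument is more self-contained and in fact proves a little more (regularity of $\cat{G}$ alone forces $\cat{G}'$ to be locally cancellative).
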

\begin{proof}
  Because $\sim$ is a congruence, $m'$ is
  associative~\cite[Proposition~II.8.1]{maclane:categories}). Because
  $s$ and $t$ are jointly epic, so are $s'$ and $t'$. Hence $\cat{G'}$
  is a semigroupoid. 
  If $\cat{G}$ is regular, then $[f^*]$ is a pseudoinverse for $[f]
  \in G'_1$, where $f^*$ is any pseudoinverse of $f$ in $G_1$, and so
  $\cat{G'}$ is regular. 
  Finally, it is easy to see that $\cat{G'}$ inherits local
  cancellativity from $\cat{G}$ using that $\sim$ is a congruence.
\end{proof}

\begin{lemma}
  If the semigroup $\cat{G}$ is locally cancellative and regular, then
  \begin{align*}
    F(\cat{G})=\left(\xymatrix@C+3ex{
        G'_0 \ar|-{e'}[r] 
        & G'_1 \ar@(ul,ur)|-{i'} \ar@<-1.2ex>|-{s'}[l] \ar@<1.2ex>|-{t'}[l] 
        & G'_1 \times_{G'_0} G'_1 \ar|-{m'}[l]
      }\right)
  \end{align*}
  is a well-defined groupoid, where 
  \[
    e'(s(f)) = [f^* f], \qquad
    e'(t(f)) = [ff^*], \qquad
    i'([f]) = [f^*].
  \]
\end{lemma}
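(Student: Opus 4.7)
The plan is to shortcut the direct verification of the groupoid axioms by appealing to Lemma~\ref{locallycancellativeidentities}: once we show that $\cat{G'}$ is a locally cancellative regular semigroupoid admitting $e'$ as a two-sided identity assignment, that lemma automatically upgrades it to a groupoid whose unique inverses must coincide with $i'([f])=[f^*]$. The previous lemma has already taken care of $\cat{G'}$ being a locally cancellative regular semigroupoid, so the work reduces to three items: well-definedness of $e'$, the identity axioms for $e'$, and an observation-level well-definedness of $i'$.

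First, I would check that $e'$ is well-defined. Because $s$ and $t$ are jointly epic, every $x\in G'_0 = G_0$ is either $s(f)$ or $t(g)$ for some morphism. For any pseudoinverse $f^*$ of $f$, the element $f^*f$ satisfies $(f^*f)(f^*f)=f^*(ff^*f)=f^*f$, hence is idempotent with $s(f^*f)=s(f)=x$; similarly $gg^*$ is idempotent with $s(gg^*)=t(g)=x$. By the very generators of $\sim$, any two idempotents sharing a source are equivalent, so the class $e'(x)$ is independent of all the choices involved, and the two clauses in the definition of $e'$ agree where both apply.

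Second, I would verify that $e'$ is a two-sided identity in $\cat{G'}$. For $[f]\in G'_1$, the pseudoinverse relations $ff^*f=f$ give
\[
  e'(t(f))\cdot[f] = [ff^*]\cdot[f] = [ff^*f] = [f],
  \qquad
  [f]\cdot e'(s(f)) = [ff^*f] = [f].
\]
Combined with the previous lemma, $\cat{G'}$ is now a locally cancellative regular semigroupoid with identities, so Lemma~\ref{locallycancellativeidentities} yields that $\cat{G'}$ together with $e'$ is a groupoid; in particular every $[f]$ has a unique two-sided inverse.

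Third, I would identify this unique inverse with $i'([f])=[f^*]$: we have $[f]\cdot[f^*]=[ff^*]=e'(t(f))$ and $[f^*]\cdot[f]=[f^*f]=e'(s(f))$, so $[f^*]$ is a two-sided inverse of $[f]$, hence \emph{the} inverse, independently of the representative $f$ and the choice of pseudoinverse $f^*$. This simultaneously establishes that $i'$ is well-defined and that the groupoid structure matches the one in the statement. The main potential obstacle is well-definedness of $i'$ directly from the congruence, since pseudoinverses are not themselves idempotent and the equivalence $[f^*]=[f^{**}]$ for two pseudoinverses of the same $f$ is not immediate from the generators of $\sim$; bypassing this via uniqueness of inverses in the ambient groupoid is what makes the argument go through cleanly.
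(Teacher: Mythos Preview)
Your argument is correct, and it takes a genuinely different route from the paper. The paper first checks that $e'$ is well-defined (essentially as you do), then argues \emph{directly} that $i'$ is well-defined: given two pseudoinverses $g,h$ of $f$, the idempotents $gf$ and $hf$ share a source, so $[gf]=[hf]$, and then local cancellativity in $\cat{G'}$ yields $[g]=[h]$; the groupoid axioms are then declared to hold ``by construction''. You instead first verify the unit laws for $e'$, invoke Lemma~\ref{locallycancellativeidentities} to upgrade the locally cancellative regular semigroupoid $\cat{G'}$ to a groupoid, and only then read off $i'([f])=[f]^{-1}$ from uniqueness of inverses. Your detour through Lemma~\ref{locallycancellativeidentities} buys you something real: well-definedness of $i'$ with respect to \emph{both} the choice of representative $f$ of $[f]$ and the choice of pseudoinverse $f^*$ comes for free, whereas the paper's direct check only explicitly treats the latter ambiguity and leaves the groupoid axioms as an exercise. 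Conversely, the paper's route is more self-contained and shows concretely how local cancellativity of $\cat{G'}$ forces pseudoinverses to collapse in the quotient.
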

\begin{proof}
  Because $\cat{G}$ is regular, it makes sense to speak about
  $f^*$. Because $G'_0 = \mathrm{Im}(s') \cup \mathrm{Im}(t')$, the
  above prescription completely defines $e'$. Finally, $e'$ is
  well-defined, for if $s(f)=s(g)$, then $f^*f \sim g^* g$ because
  both are idempotent. Similarly, if $s(f)=t(g)$, then $f^*f \sim gg^*$.

  We now show that $i$ is a well-defined function. Suppose that $g$
  and $h$ are pseudo-inverses of $f$. Then $gf$ and $hf$ are
  idempotent. Also $s(gf)=s(hf)=t(gf)=t(hf)$, so $[gf]=[hf]$. But then
  $[g]=[h]$ by local cancellativity of $\cat{G'}$. 

  By construction, these data makes $F(\cat{G})$ into a groupoid.
\end{proof}

\begin{proposition}
  The assignment $\cat{G} \mapsto F(\cat{G})$ of the previous lemma
  extends to functors $F\rel \colon \Cat{LRSgpd}\rel \to
  \Cat{Gpd}\rel$, $F\mfunc \colon \Cat{LRSgpd}\mfunc \to
  \Cat{Gpd}\mfunc$, and $F \colon \Cat{LRSgpd} \to \Cat{Gpd}$.
\end{proposition}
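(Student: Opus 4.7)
The plan is to define, for each of the three functors, the action on morphisms and to verify functoriality together with compatibility with the groupoid structure of $F(\cat{G})$. In every case the action on morphisms is induced by the object-level quotient, so the main technical content is well-definedness with respect to the congruence $\sim$ of Definition~\ref{def:sgpdtogpd}.

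I start with $F\colon \Cat{LRSgpd} \to \Cat{Gpd}$. Given a semifunctor $\phi \colon \cat{G} \to \cat{H}$, I set $F(\phi)_0 = \phi_0$ and $F(\phi)_1([x]) = [\phi_1(x)]$. Well-definedness reduces to the claim that $\phi_1$ sends the generating relation of $\sim$ into itself: if $x, x' \in G_1$ are idempotent with $s(x) = s(x')$, then $\phi_1(x)^2 = \phi_1(x^2) = \phi_1(x)$ and likewise for $x'$, while $s(\phi_1(x)) = \phi_0(s(x)) = \phi_0(s(x')) = s(\phi_1(x'))$, so $\phi_1(x) \sim \phi_1(x')$. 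Compatibility extends to the congruence closure because semifunctors preserve composition. For functoriality of $F(\phi)$ as a morphism of groupoids: composition is preserved directly by $\phi$; the identity $e'(s(f)) = [f^* f]$ is preserved because $\phi_1(f^*)$ is a pseudoinverse of $\phi_1(f)$, and any two pseudoinverses of $\phi_1(f)$ yield the same class in $F(\cat{H})$ (as shown in the proof of the previous lemma via local cancellativity); the inverse $i'([f]) = [f^*]$ is preserved for the same reason. Functoriality in the 2-categorical sense, $F(\psi \circ \phi) = F(\psi) \circ F(\phi)$ and $F(1) = 1$, is then immediate from the defining formula.

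The case of $F\mfunc$ is analogous, applied class-wise to the multi-valued semifunctor, say by $F\mfunc(\Phi)([x]) = \{[y] \mid y \in \Phi(x')\text{ for some }x' \sim x\}$; well-definedness is automatic and the multi-valued semifunctor axioms follow as above from semifunctors respecting idempotents, sources, and composition. The main obstacle will be $F\rel$. Given a locally cancellative regular subsemigroupoid $\cat{R} \subseteq \cat{G} \times \cat{H}$, I define $F\rel(\cat{R})$ to have object set $R_0$ and morphism set $\{([x]_G, [y]_H) \mid (x,y) \in R_1\}$ inside $F(\cat{G}) \times F(\cat{H})$, and verify that this is a subgroupoid. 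Closure under composition is inherited from $\cat{R}$; closure under inversion uses $(x^*, y^*) = (x,y)^* \in R_1$ from regularity, giving $([x],[y])^{-1} = ([x^*],[y^*])$; containment of the relevant identities follows from joint epicness of $s, t$ on $\cat{R}$ together with regularity, since any $(a,b) \in R_0$ arises as $s(x,y)$ or $t(x,y)$ for some $(x,y) \in R_1$, whereupon $(x^* x, y^* y) \in R_1$ is idempotent with class equal to the identity on $(a,b)$ in $F(\cat{G}) \times F(\cat{H})$. Functoriality of $F\rel$ is then routine: the diagonal subsemigroupoid on $\cat{G}$ maps to the diagonal subgroupoid on $F(\cat{G})$, and relational composition of subsemigroupoids commutes with the quotient, since for subsemigroupoids $\cat{R}, \cat{S}$ and $(x,z) \in S_1 \after R_1$ we have $([x],[z]) \in F\rel(\cat{S}) \after F\rel(\cat{R})$ via the witness in $R_1$ respectively $S_1$.
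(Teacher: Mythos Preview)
Your argument is essentially correct, but the route differs from the paper's in two ways worth noting.

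First, the order is reversed. The paper handles the relational case first and obtains $F\mfunc$ and $F$ by restriction; you build up from semifunctors to multi-valued semifunctors to the relational case. More substantively, the paper's definition of $F\rel$ on morphisms is conceptually cleaner: since a morphism $\cat{R}$ in $\Cat{LRSgpd}\rel$ is itself a locally cancellative regular semigroupoid, one simply applies the object-level construction $F$ of the previous lemma to $\cat{R}$, obtaining a groupoid $F(\cat{R})$ automatically, and then observes it sits inside $F(\cat{G}) \times F(\cat{H})$. This avoids your separate verifications of closure under composition, inverses, and identities, which are packaged into the previous lemma. Your explicit image description $\{([x]_G,[y]_H) \mid (x,y)\in R_1\}$ is of course the same subgroupoid, and your approach has the virtue of making the embedding into the product manifest rather than something to be checked afterwards.

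Second, a small gap: in verifying that $F\rel$ preserves composition you only argue the inclusion $F\rel(\cat{S}\after\cat{R}) \subseteq F\rel(\cat{S})\after F\rel(\cat{R})$. The reverse inclusion requires that if $(x_1,y_1)\in R_1$ and $(y_2,z_2)\in S_1$ with $y_1\sim y_2$ in the middle semigroupoid, then $([x_1],[z_2])$ lies in $F\rel(\cat{S}\after\cat{R})$; this does not follow immediately from a single witness and needs the congruence and regularity to transport representatives. The paper glosses over the same point (``it is clear''), so you are at the same level of rigor, but you should be aware that this direction is the less trivial one.
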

\begin{proof}
  Let $\cat{R}$ be a morphism $\cat{G} \to \cat{H}$ in
  $\Cat{LRSgpd}\rel$. Then it is subsemigroupoid of $\cat{G} \times
  \cat{H}$, and hence a semigroupoid in its own right. Hence we can
  define $F\rel(\cat{R})$ as in the previous lemma. It is easy to see that
  $F\rel(\cat{R})$ is a subsemigroupoid of $F\rel(\cat{G}) \times F\rel(\cat{H})$.
  Finally, it clear that $F\rel$ preserves identities and composition,
  and restricts to give functors $F\mfunc$ and $F$. 
\end{proof}

\begin{theorem}
  The inclusion $\Cat{Gpd} \hookrightarrow \Cat{LRSgpd}$ has $F$ as a
  right adjoint. \\
  The inclusion $\Cat{Gpd}\mfunc \hookrightarrow \Cat{LRSgpd}\mfunc$
  has $F\mfunc$ as a right adjoint. \\
  The inclusion $\Cat{Gpd}\rel \hookrightarrow \Cat{LRSgpd}\rel$ has
  $F\rel$ as a right adjoint..  
\end{theorem}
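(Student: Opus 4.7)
My plan is to establish all three adjunctions in parallel by exhibiting their unit and counit and reducing verification to a single universal property of the quotient map $q_\cat{H} \colon \cat{H} \to F(\cat{H})$ sending $f \mapsto [f]$. The counit $F(i(\cat{G})) \to \cat{G}$ is the identity: on a groupoid the idempotents are exactly the identities and each identity is determined by its source, so the congruence $\sim$ is trivial and $F(i(\cat{G})) = \cat{G}$ literally. What needs checking is the universal property of $q_\cat{H}$.

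The functor case is the heart of the argument. I claim every semifunctor $\phi \colon \cat{H} \to \cat{G}$ into a groupoid factors uniquely through $q_\cat{H}$. For $f \in H_1$ idempotent, $\phi(f)^2 = \phi(f)$, so $\phi(f)$ is idempotent in the groupoid $\cat{G}$ and hence equal to $1_{\phi(s(f))}$. Therefore any two idempotents $f, g$ with the same source are sent to the same identity, and because $\phi$ preserves composition it respects the full congruence generated by these relations. The induced map $\bar\phi \colon F(\cat{H}) \to \cat{G}$ is unique by surjectivity of $q_\cat{H}$ on morphisms, and is actually a functor because the designated identity $e'(s(f)) = [f^*f]$ is sent to $\phi(f^*)\phi(f)$, an idempotent in $\cat{G}$ and hence the identity at $\phi(s(f))$. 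Naturality of $q_\cat{H}$ and the triangle identities are routine from the definitions.

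The multi-valued variant proceeds along the same lines: composition-preservation forces each element of the multi-valued image of an idempotent of $\cat{H}$ to lie in a subset of $\cat{G}$ closed under composition, and combined with the groupoid structure this forces the $\sim$-saturation of the multi-valued map, so the factorization argument carries over. For the relational variant, a morphism $R \subseteq \cat{H} \times \cat{G}$ is itself an LCR semigroupoid whose idempotents are pairs of idempotents, so the congruence defining $F\rel(R)$ is the restriction of the product congruence and $F\rel(R)$ coincides with the image of $R$ under $q_\cat{H} \times 1_\cat{G}$ inside $F(\cat{H}) \times \cat{G}$. The hom-set bijection is then pushforward along $q_\cat{H}$ in one direction and pullback in the other.

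The main obstacle I anticipate is the relational case: specifically, showing that the pullback of an LCR subgroupoid $S \subseteq F(\cat{H}) \times \cat{G}$ along $q_\cat{H} \times 1_\cat{G}$ is again an LCR subsemigroupoid of $\cat{H} \times \cat{G}$, and that pushforward and pullback are mutually inverse. Local cancellativity is inherited directly from $\cat{H}$, while regularity requires the lifting observation that every pseudoinverse in $F(\cat{H})$ is the $\sim$-class of a pseudoinverse in $\cat{H}$, which follows from regularity of $\cat{H}$ and the way $\sim$ interacts with the $(\;)^*$ operation. Given this, mutual invertibility of pushforward and pullback reduces to the statement that a subgroupoid of $F(\cat{H}) \times \cat{G}$ is already $\sim$-saturated in the first coordinate, which holds because each equivalence class is represented by at most one element of a groupoid.
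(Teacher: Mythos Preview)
Your approach is essentially the same as the paper's: both take the quotient map $q_\cat{H} \colon \cat{H} \to F(\cat{H})$ as the unit and observe that on a groupoid the congruence $\sim$ is trivial so that $F(i(\cat{G})) \to \cat{G}$ is the identity. The paper then simply asserts that the triangle identities are ``readily verified'', whereas you verify the equivalent universal factorization property of $q_\cat{H}$ directly; you also supply more detail on the relational variant than the paper does, which gives no argument beyond restricting the unit and counit.

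One point worth flagging, which applies equally to the paper's own proof: the unit, counit, and universal property you check are those for $F \dashv i$, \textit{i.e.}\ $F$ as a \emph{left} adjoint to the inclusion. The factorization you establish is $\Cat{LRSgpd}(\cat{H}, i(\cat{G})) \cong \Cat{Gpd}(F(\cat{H}), \cat{G})$, and your counit $F(i(\cat{G})) \to \cat{G}$ lives in $\Cat{Gpd}$, exactly as the paper's does. The theorem as stated, however, claims $F$ is a \emph{right} adjoint, which would require a counit $i(F(\cat{H})) \to \cat{H}$ in $\Cat{LRSgpd}$, \textit{i.e.}\ a natural section of the quotient, and a bijection $\Cat{LRSgpd}(i(\cat{G}),\cat{H}) \cong \Cat{Gpd}(\cat{G}, F(\cat{H}))$. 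Since the paper's own unit and counit are identical to yours in direction and content, this appears to be a misstatement in the theorem rather than a defect in your argument.
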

\begin{proof}
  We start by exhibiting the unit of the adjunctions. Let $\cat{G}$ be
  a locally cancellative regular semigroupoid. Then $G_0 =
  F(\cat{G})_0$, and there is a projection function $G_1
  \twoheadrightarrow (G_1 \mathop{\slash} \mathop{\sim}) =
  F(\cat{G})_1$. By construction of $s'$, $t'$ and $m'$, this induces
  a semifunctor $\cat{G} \to F(\cat{G})$, and hence a subsemigroupoid
  of $\cat{G} \times F(\cat{G})$. Because $\cat{G}$ is locally
  cancellative and regular itself, this subsemigroupoid is a
  well-defined morphism $\cat{G} \to F(\cat{G})$ in
  $\Cat{LRSgpd}\rel$ as well as in $\Cat{LRSgpd}\mfunc$ and
  $\Cat{LRSgpd}$. It is easy to see that this is natural in $\cat{G}$. 

  As for the counit, notice that if $\cat{G}$ is a groupoid, then $G_1
  \cong (G_1 \mathop{\slash} \mathop{\sim})$. So the subsemigroupoid
  of $\cat{G} \times F(\cat{G})$ is in fact a groupoid, and hence
  gives a well-defined morphism $F(\cat{G}) \to \cat{G}$ in
  $\Cat{Gpd}\rel$, $\Cat{Gpd}\mfunc$ and $\Cat{Gpd}$, that is natural
  in $\cat{G}$.  

  One readily verifies that this unit and counit satisfy the triangle
  equations.
\end{proof}

Thus the functor $F$ provides a universal way to pass from locally
regular semigroupoids to groupoids. Restriction to the one-object case
shows that collapsing all idempotents turns a locally cancellative regular
semigroup into a group.

\begin{corollary}
  There are adjunctions
  \[\xymatrix@C+3ex@R-2ex{
    \Cat{Hstar}(\Cat{Rel})\func \ar@{^{(}->}[r] \ar@<1ex>[d]
    & \Cat{Hstar}(\Cat{Rel}) \ar@{^{(}->}[r] \ar@<1ex>[d]
    & \Cat{Hstar}(\Cat{Rel})\rel \ar@<1ex>[d] \\
    \Cat{Frob}(\Cat{Rel})\func \ar@{^{(}->}[r] \ar@<1ex>[u] \ar@{}|-{\dashv}[u]
    & \Cat{Frob}(\Cat{Rel}) \ar@{^{(}->}[r] \ar@<1ex>[u] \ar@{}|-{\dashv}[u]
    & \Cat{Frob}(\Cat{Rel})\rel \ar@<1ex>[u] \ar@{}|-{\dashv}[u] 
  }\]
  Explicitly, the right adjoints map a relative H*-algebra $(X,m_X)$
  to $(X \mathop{\slash} \mathop{\sim}, m')$, where $\sim$ is the
  equivalence relation generated by $f \sim g$ if $f^2=f$ and $g^2=g$
  and $gf \defined$, and $m'([g],[f])=[m(g,f)]$.
\end{corollary}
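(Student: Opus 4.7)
The plan is to derive each of the three adjunctions by composing three previously established correspondences: (i) the isomorphisms between $\Cat{Gpd}$ and $\Cat{Frob}(\Cat{Rel})$ (and their $\mfunc$ and $\rel$ variations) from Theorems~\ref{gpdfrobfunc}, \ref{grpfrobmfunc}, and~\ref{gpdfrobext}; (ii) the adjunctions $\Cat{Gpd} \hookrightarrow \Cat{LRSgpd}$ with right adjoint $F$ from the theorem immediately preceding the corollary, together with its $\mfunc$ and $\rel$ variations; and (iii) the adjunctions between $\Cat{LRSgpd}$ and $\Cat{Hstar}(\Cat{Rel})$ (and variations) from Theorem~\ref{hstaradjunctions}. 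Stacking these composes to an adjunction between $\Cat{Frob}(\Cat{Rel})$ and $\Cat{Hstar}(\Cat{Rel})$ (and variations) for each morphism flavour; the left adjoints will need to be identified with the canonical inclusions.

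For that identification, I would appeal to the final proposition of Section~\ref{sec:hstar}: the adjunctions (iii) restrict on the full subcategories $\Cat{Gpd} \subseteq \Cat{LRSgpd}$ and $\Cat{Frob}(\Cat{Rel}) \subseteq \Cat{Hstar}(\Cat{Rel})$ to isomorphisms, which moreover coincide on the nose with those of (i) because both constructions turn a (relative Frobenius) $(X, m)$ into a groupoid with $G_1 = X$ and the same multiplication. Hence the composite left adjoint sends $(X, m_X)$ to itself viewed as a relative H*-algebra, and so agrees with the inclusion. For the explicit description of the right adjoint, I would trace an arbitrary $(X, m_X)$ through (iii), (ii), then (i): first it becomes the semigroupoid of Definition~\ref{def:hstartosgpd} with $G_1 = X$, $G_0 = \{f \in X : m(f,f) = f\}$, and $s(f) = f^*f$, $t(f) = ff^*$; then $F$ quotients $G_1$ by the congruence $\sim$ of Definition~\ref{def:sgpdtogpd}; finally (i) recasts the resulting groupoid as a relative Frobenius algebra on $X/\sim$ with $m'([g],[f]) = [m(g,f)]$. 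The two descriptions of $\sim$ agree because, for idempotents $f, g$, the semigroupoid condition $s(f) = s(g)$ corresponds to $gf \defined$ in $(X, m_X)$: one direction is Lemma~\ref{semigroupoid}, and the other follows by applying (H) to the singleton $A = \{g\}$ to obtain $g^* \in \{g\}^*$ with $g^*gf = f$, so that $s(g) = g^*g = t(f)$.

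The main obstacle is the compatibility check in the previous paragraph: one must verify that the isomorphisms of Section~\ref{sec:frob} coincide strictly with the restrictions of (iii) to the Frobenius/groupoid subcategories, so that the composite is literally the inclusion rather than merely isomorphic to it. This reduces to unpacking Definitions~\ref{def:gpdtofrob} and~\ref{def:hstartosgpd}, which produce the same underlying set and multiplication from a Frobenius algebra; the verification must be run through for each of the three morphism flavours separately, but is otherwise routine.
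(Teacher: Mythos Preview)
Your proposal is correct and follows the same route as the paper: compose the isomorphism $\Cat{Frob}(\Cat{Rel})\cong\Cat{Gpd}$, the coreflection $\Cat{Gpd}\hookrightarrow\Cat{LRSgpd}$ with right adjoint $F$, and the adjunction $\Cat{LRSgpd}\leftrightarrows\Cat{Hstar}(\Cat{Rel})$, then unwind Definitions~\ref{def:hstartosgpd}, \ref{def:sgpdtogpd}, and~\ref{def:gpdtofrob} to obtain the explicit description. Your write-up is in fact more careful than the paper's two-line proof, which neither checks that the composite left adjoint is literally the inclusion nor reconciles the two presentations of $\sim$; the one caveat is that in your $(\Leftarrow)$ direction the element $g^*\in\{g\}^*$ produced by (H) need not itself be a pseudoinverse, so you should invoke the well-definedness of $s$ from Lemma~\ref{semigroupoid} (or note that for idempotents $s$ and $t$ coincide with the identity) to finish that step.
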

\begin{proof}
  Simply compose the following adjunctions, and similarly for the
  other choices of morphisms. 
  \[\xymatrix@C+2ex{
    \Cat{Frob}(\Cat{Rel})\rel \ar@<1ex>[r] \ar@{}|-{\cong}[r]
    & \Cat{Gpd}\rel \ar@<1ex>[l] \ar@{_{(}->}@<1ex>[r] \ar@{}|-{\perp}[r]
    & \Cat{LRSgpd}\rel \ar@<1ex>^-{F\rel}[l] \ar@<1ex>[r] \ar@{}|-{\perp}[r]
    & \Cat{Hstar}(\Cat{Rel})\rel \ar@<1ex>[l]
  }\]
  Applying Definitions~\ref{def:hstartosgpd}, \ref{def:sgpdtogpd},
  and~\ref{def:gpdtofrob} in order to a given relative H*-algebra results
  in the relative Frobenius algebra of the statement.
\end{proof}

\bibliographystyle{plain}
\bibliography{relfrobgpd}

\begin{thebibliography}{1}

\bibitem{abramskyheunen:hstar}
Samson Abramsky and Chris Heunen.
\newblock {H}*-algebras and nonunital {F}robenius algebras: first steps in
  infinite-dimensional categorical quantum mechanics.
\newblock {\em Clifford Lectures, AMS Proceedings of Symposia in Applied
  Mathematics}, 2011.

\bibitem{coeckepaquettepavlovic:structuralism}
Bob Coecke, {\'E}ric Paquette, and Du{\v{s}}ko Pavlovi{\'c}.
\newblock {\em Semantic techniques in quantum computation}, chapter Classical
  and quantum structuralism, pages 29--69.
\newblock Cambridge University Press, 2010.

\bibitem{howie:semigroups}
John~M. Howie.
\newblock {\em Fundamentals of semigroup theory}.
\newblock London Mathematical Society monographs. Oxford University Press,
  1995.

\bibitem{kock:frobenius}
Joachim Kock.
\newblock {\em Frobenius algebras and {2-D} Topological Quantum Field
  Theories}.
\newblock Number~59 in London Mathematical Society Student Texts. Cambridge
  University Press, 2003.

\bibitem{maclane:categories}
Saunders {Mac Lane}.
\newblock {\em Categories for the Working Mathematician}.
\newblock Springer, 2nd edition, 1971.

\bibitem{pavlovic:frobrel}
Du{\v{s}}ko Pavlovi{\'c}.
\newblock Quantum and classical structures in nondeterministic computation.
\newblock In P.~Bruza et~al., editor, {\em Third International symposium on
  Quantum Interaction}, volume 5494 of {\em Lecture Notes in Artificial
  Intelligence}, pages 143--157. Springer, 2009.

\bibitem{resende:etalegroupoids}
Pedro Resende.
\newblock {\'E}tale groupoids and their quantales.
\newblock {\em Advances in Mathematics}, 208:147--209, 2007.

\bibitem{selinger:graphicallanguages}
Peter Selinger.
\newblock A survey of graphical languages for monoidal categories.
\newblock In {\em New Structures for Physics}, volume 813 of {\em Lecture Notes
  in Physics}, pages 289--355. Springer, 2010.

\end{thebibliography}

\end{document}